\newtheorem{Lemma}{Lemma}
\newtheorem{Theorem}[Lemma]{Theorem}
\newtheorem{Corollary}{Corollary}
\newtheorem{example}{Example}
\newcommand{\set}[1]{\left\{#1\right\}}
\newcommand{\bmt}{\left[ \begin{array}{ccccccccc}}
\newcommand{\emt}{\end{array}\right]}
\newcommand{\bea}{\begin{eqnarray}}
\newcommand{\eea}{\end{eqnarray}}
\newcommand{\bean}{\begin{eqnarray*}}
\newcommand{\eean}{\end{eqnarray*}}
\newcommand{\IC}{\mathbb C}
\newcommand{\IR}{\mathbb R}
\newcommand{\Cnn}{\IC^{n\times n}}
\newcommand{\Cn}{\IC^n}
\newcommand{\Rnn}{\IR^{n\times n}}
\newcommand{\tr}{\textnormal{tr}}
\newcommand{\rank}{\textnormal{rank}}
\newcommand{\tn}[1]{\quad\textnormal{#1}\quad}
\newcommand{\bm}{\boldsymbol}
\newcommand{\F}{\mathcal{F}}
\newcommand{\Rk}{\mathsf{P}}
\newcommand{\lam}[1]{\lambda_}
\newcommand{\bmat}[1]{\begin{bmatrix}#1\end{bmatrix}}
\journal{Linear Algebra and its Applications}
\begin{document}

\begin{frontmatter}
\title{Rank-one Characterization of Joint Spectral Radius of Finite Matrix Family\tnoteref{t1}}
\tnotetext[t1]{This work was supported by NSF 1021203 of the United States.}

\author[LX]{Jun Liu}
\ead{jliu@math.siu.edu}
\author[LX]{Mingqing Xiao}
\ead{mxiao@math.siu.edu}


\address[LX]{Department of Mathematics, Southern Illinois University, Carbondale, IL 62901-4408, USA}

\begin{abstract}
In this paper we study the joint/generalized spectral radius of a finite set of matrices in terms of its rank-one approximation by singular value decomposition. In the first part of the paper, we show that any finite set of matrices with at most one element's rank being  greater than one satisfies the finiteness property under 
the framework of (invariant) extremal norm. 
Formula for the computation of joint/generalized spectral radius for this class of matrix family 
is derived. Based on that, in the second part,  we further study the joint/generalized spectral radius of finite sets of 
general matrices through constructing rank-one approximations in terms of 
singular value decomposition, and some new characterizations of joint/generalized spectral radius 
are obtained. Several benchmark examples from applications as well as corresponding numerical computations
are provided to illustrate the approach.
\end{abstract}
\begin{keyword} 
joint/generalized spectral radius; finiteness property; extremal norm; Barabanov norm; 
singular value decomposition.
\end{keyword}


\end{frontmatter}

\pagestyle{myheadings}
\thispagestyle{plain}

\markboth{J. Liu and M. Xiao}{Rank-one Characterization of Joint Spectral Radius}

\section{Introduction}
The joint spectral radius of a finite set of $n\times n$ matrices describes the maximal asymptotic growth rate of products of matrices taken in the set, and it plays a critical role in many applications, for example, in the study of wavelet theory \cite{Broker2000,Daubechies1992LAA,Daubechies1992SIAM,Daubechies2001,Protasov2006},
stability of switched and hybrid systems \cite{Dai2008,Dai2010, Gurvits1995,Shorten2007}, 
subdivision algorithms for generating curves \cite{Derfel1995,Guglielmi2010},
overlap-free words \cite{Jungers2009},
asymptotic behavior of partition functions \cite{Protasov2000}, 
and their references therein. 
Therefore, efficient algorithms with desirable accuracy are necessary for the computation of joint spectral radius in order to meet the demands from applications.

Let a finite set  $\F=\set{A_1,A_2,\cdots,A_m}\subset \IC^{n\times n}$ of complex $n\times n$ matrices be given and we denote by  $\F_k$ the set of all possible products of length $k\ge 1$ with elements from $\F$, i.e., 
$$\F_k=\set{A_{i_1}A_{i_2}\cdots A_{i_k}:\; A_{i_j}\in \F; 1\le i_j\le m,\ j=1,\ldots,k}.$$
Let $\|\cdot\|$ be any sub-multiplicative matrix norm and $\rho(A)$ be the spectral radius of a matrix $A$.
The joint spectral radius (JSR) of $\F$, introduced by Rota and Strang \cite{Rota1960}, is defined as
\[
\hat{\rho}(\F)=\lim_{k\to\infty}\max_{A\in\F_k}\|A\|^{1/k},
\]
and the generalized spectral radius of $\F$, initiated by Daubechies and Lagarias \cite{Daubechies1992LAA}, is given by
\[
\bar{\rho}(\F)=\limsup_{k\to\infty}\max_{A\in\F_k}\rho(A)^{1/k}.
\]
Since the equality $\hat{\rho}(\F)=\bar{\rho}(\F)$ has been established 
for any finite set of matrices \cite{Berger1992,Elsner1995,Shih1997}, unless it is necessary, 
we shall not distinguish between them and 
designate an unified notation $\rho(\F)(=\hat{\rho}(\F)=\bar{\rho}(\F))$ throughout the paper.
Another equivalent variational way of characterizing JSR is \cite{Rota1960}
\bea
\label{infnorm}
\rho(\F)=\inf_{\|\cdot\|}\max_{A\in\F}\|A\|,
\eea
where the infimum is taken over the set of all sub-multiplicative matrix norms.
Whenever the infimum in (\ref{infnorm}) is attained (thus it is a minimum),
the corresponding norm $\|\cdot\|_*$ will be called an extremal norm of $\F$ \cite{Wirth2002}.
The characteristic (\ref{infnorm}) is important and useful if $\|\cdot\|_*$ is available and efficiently computable
for a given $\F$.

Earlier algorithms \cite{Gripenberg1996,Maesumi1996} for computing or approximating the joint/generalized spectral radius mostly make use of
the following inequalities
\begin{equation}
\label{3ineq}
 \max_{A\in\F_k}\rho(A)^{1/k}\le \rho(\F) \le \max_{A\in\F_k}\|A\|^{1/k}
\end{equation}
for any $k\ge 1$. 
In general, however,  such a brute-force approach is impractical to solve the problem
since the computational cost will easily exceed the limit of toady's computer capacity even for small $k$, 
in particular, for large-scale matrices.
In order to obtain better approximations within current computational capacity, 
many numerical methods were proposed during last decade.
We categorize them into two main approaches.

The first approach is to try to construct the extremal norm $\|\cdot\|_*$ or to approximate it when it exists.
One necessary and sufficient condition for the existence of an extremal norm is
the non-defectiveness of the corresponding normalized matrix family \cite{Guglielmi2005}, 
which is not algorithmically decidable \cite{Blondel2000}.
In \cite{Blondel2005Laa}, the minimization was restricted to the set of ellipsoid norms,
which can be efficiently approximated by current convex optimization algorithms.
This approach provides a theoretical precision estimation of $\rho(\F)$ in limited applicable cases. 
In \cite{Guglielmi2005,Guglielmi2008,Guglielmi2009},
the minimization was confined to the set of complex polytope norms. 
The successful construction of such a polytope norm is not guaranteed in general, 
and it is more suitable to be used to verify  the occurrence of the finiteness property of $\F$  \cite{Lagarias1995},  that is, to validate the case when there is a positive integer $t$ such that 
\[
\rho(\F)=\rho(A_{i_1}A_{i_2}\cdots A_{i_t})^{1/t}
\]
for some finite product $A_{i_1}A_{i_2}\cdots A_{i_t}\in\F_t$, and the corresponding product sequence is called a spectral maximizing sequence.
Within this framework,  other special extremal norms,
such as Barabanov norm \cite{Wirth2002}, Optimal norm \cite{Maesumi2008}, were also considered.
Kozyakin \cite{Kozyakin2010} considered an iterative algorithm which approximates
$\rho(\F)$ through constructing a sequence of approximated Barabanov norms under the assumption of irreducibility.
However, the computational cost is too high since it requires to construct the unit ball with respect to the Barabanov norm, and  the issue of estimating the convergence rate remains unsolved.
The sum of squares method investigated in \cite{Parrilo2008} was intended to approximate
the extremal norm through a multivariate polynomial with norm-like quality under which
the action of matrices becomes contractive. 
However, to obtain an analytic extremal norm expression is usually quite challenging,
and it seems there is no easy solution so far.

The second approach makes use of the cone invariance of a given matrix set $\F$ for computing its JSR when such a property exists  \cite{Protasov1996}.
In \cite{Protasov1996,Protasov2005}, an iterative algorithm which builds an approximated invariant set was developed,
which for a fixed dimension demonstrates polynomial time complexity with respect to $1/\varepsilon$,
where $\varepsilon$ is a given accuracy.
In \cite{Blondel2005}, Blondel and Nesterov introduced a Kronecker lifting based 
approximation to the JSR with an arbitrary accuracy 
under the assumption of the existence of an invariant proper cone, 
which can always be assured via one step of semi-definite lifting with the cost of squaring the matrix dimension.
The exact nature of this cone is irrelevant to the derived accuracy of estimation.
Following this methodology, a new conic programming method was offered in \cite{Protasov2010}, which
gives an improved accuracy estimation by taking the specific nature of the invariant cone into the consideration. In general, the existence of an invariant cone is restrictive and may exclude many interesting cases in real applications. 

In numerical computation of joint/generalized spectral radius, criteria for determining if a given matrix family satisfies the finiteness property
may help us to develop a decidable and efficient algorithm.
The original finiteness conjecture \cite{Lagarias1995} stated that the finiteness property is true for
 any finite $n\times n$ real matrix family, 
which was recently proved to be false \cite{Bousch2002,Blondel2003,Kozyakin2005}.
The existence of such counterexamples
shows that the finiteness property does not hold in general.
At the same time, it has been found since then that many classes of matrices possess this computationally favorable feature.
In \cite{Blondel2005Laa}, Blondel, Nesterov, and Theys proved the finiteness property for
the matrix family with a solvable Lie algebra. 
In \cite{Jungers2009Book}, the normal and commonly triangularizable matrix family were added into the list.
Later on, a restricted version of finite conjecture claimed that the finiteness property is true for
every pair of $n\times n$ sign-matrices \cite{Jungers2008}.
The significance of this conjecture lies in its equivalence to the finiteness property of all sets of rational matrices.
Along this track, the case of $2\times 2$ sign-matrices pair
was proved in \cite{Cicone2010} with the exploration of real extremal polytope norms. However, the similar conclusion for higher dimension remains unknown.
Currently, the list of matrix families satisfying the finiteness property is still very short. 

Rank-one matrices are the simplest class of matrices not only in theoretic analysis 
but also in algorithmic approximations for matrix computation since any matrix can be expressed in terms of the sum of a set of rank-one matrices, for example, in the singular value decomposition (SVD). Gurvits is probably the first one who discusses the  rank-one matrix family in terms of Euclidean norm \cite{Gurvits2005}.   Furthermore, among all those illustrative examples appeared in existing literature related to JSR, 
we have observed that all of the cases with only rank-one matrices satisfy the finiteness property.
This motivates us to consider how to obtain the corresponding JSR and then apply this finding to approximate the JSR of general matrix family.
The main contributions of this paper are: 
(i) by making use of Barabanov norm and rank-one property, 
we show that any finite set of matrices with at most one element's rank being greater than one satisfies the finiteness property and derive the computation formula for its JSR; 
(ii) based on the obtained result in (i), 
we obtain some new characterizations of the JSR of general matrix family in terms of rank-one approximation based on SVD. Numerical computations for some benchmark problems are presented and the results show some favorable estimations over existing algorithms, although we are not able to prove this is always the case.

The paper is organized as follows.
In section 2 we prove that any finite set of matrix family with at most one element's rank being greater than one possesses the finiteness property, and we give some important properties for the computation of its JSR. 
In section 3, we further develop this idea in the study of general matrix family. Connections between the JSR of general finite matrix set and its corresponding JSR of rank-one approximation based on SVD are established.  Further discussions for non-negative matrix family are also presented in this section. Several benchmark examples from real applications as well as their numerical computations are presented in section 4. The paper ends with concluding remarks in section 5. 

\section{Finite rank-one matrix family}

We first give some well-known properties of rank-one matrices,
which will be employed in our subsequent derivations.
Given a matrix $A\in \Cnn$, 
we denote by $\rank(A)$ and $\tr(A)$ the rank and trace of $A$, respectively.
We know from linear algebra that $\rank(A)=1$ if and only if there exist two nonzero vectors $x,y\in \Cn$ such that $A=xy^*$, where $y^*$ denotes the conjugate transpose of $y$.
Obviously, any rank-one matrix $A$ has at most one nonzero eigenvalue, 
denoted by $\lambda(A)=y^*x$.
In particular, the spectral radius of a rank-one matrix $A$ is $\rho(A)=|\lambda(A)|=|\tr(A)|$.
For any two rank-one matrices $A_1=x_1y_1^*\in \Cnn$ and $A_2=x_2y_2^*\in \Cnn$,
the product $A_1A_2=x_1y_1^*x_2y_2^*=(y_1^*x_2)x_1y_2^*$
is at most rank-one.
By a simple induction, the rank of arbitrary finite products of rank-one matrices 
remains at most one.

\subsection{The JSR of finite rank-one matrix family}
\label{rank-one result}
In this subsection, we will show that any finite set $\F$ of rank-one matrices possesses the finiteness property.
If $\rho(\F)=0$, then by (\ref{3ineq}) it holds  $\rho(A_i)=0$ for all $1\le i\le m$
and so the finiteness property is already true for $\F$. 
Thus we will only need to consider the case with $\rho(\F)>0$.

Recall that a general matrix family $\F$ is said to be irreducible, provided all the matrices in $\F$
have no common non-trivial invariant linear subspaces of $\IC^n$.
The following lemma indicates that $\F$ can be assumed to be  irreducible, since otherwise we could reduce $\F$ 
into several irreducible matrix families with smaller dimensions,
and then carry out the same proof arguments with each irreducible matrix family to draw the same conclusion.

\begin{Lemma}[\cite{Berger1992}]
\label{reducible}
For any finite matrix family $\F=\{A_1,A_2,\cdots,A_m\}\subset\IC^{n\times n}$, there exist
a nonsingular matrix $P\in\IC^{n\times n}$ and $r$ positive integers 
$\{n_1,n_2,\cdots,n_r\}$ with $n_1+n_2+\cdots+n_r=n$ such that
\[
 PA_iP^{-1}=
\bmat{A_i^{(1)}&0&\cdots&0\\
*&A_i^{(2)}&\cdots&0 \\
\vdots&\vdots&\ddots&\vdots \\
*&*&\cdots&A_i^{(r)}
}
\tn{for} i=1,2,\cdots,m,
\]
where $\F^{(j)}:=\{A_1^{(j)},A_2^{(j)},\cdots,A_m^{(j)}\}\subset \IC^{n_j\times n_j}$
is irreducible for $j=1,2,\cdots,r$, satisfying
\[
 \rho(\F)=\max_{1\le j\le r}\rho(\F^{(j)}).
\]
\end{Lemma}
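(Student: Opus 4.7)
The plan is to induct on the dimension $n$. The base case $n=1$ is vacuous: every scalar family is irreducible, so one takes $r=1$ and $P=1$, and the JSR identity is trivial. For the inductive step, if $\F$ is already irreducible we are again done with $r=1$ and $P=I$. Otherwise, by the very definition of reducibility, the matrices of $\F$ share a common non-trivial invariant subspace $W\subset\IC^n$ of some dimension $k$ with $0<k<n$. I would choose an ordered basis of $\IC^n$ whose last $k$ vectors span $W$, and let $Q$ be the corresponding change-of-basis matrix. Since $A_iW\subseteq W$ for every $i$, each $QA_iQ^{-1}$ takes the block lower-triangular shape
\[
QA_iQ^{-1}=\bmat{B_i & 0 \\ * & C_i},
\]
where $B_i\in\IC^{(n-k)\times(n-k)}$ represents the induced action on the quotient $\IC^n/W$ and $C_i\in\IC^{k\times k}$ represents the action on $W$. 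Applying the inductive hypothesis independently to the smaller families $\{B_i\}_{i=1}^m$ and $\{C_i\}_{i=1}^m$ yields nonsingular $P_1,P_2$ that further triangularize each into irreducible diagonal pieces. Conjugating the display above by $\mathrm{diag}(P_1,P_2)$ preserves the block lower-triangular form, and the composition $P:=\mathrm{diag}(P_1,P_2)\cdot Q$ is the required transformation.

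The JSR identity follows from the standard fact that the spectrum of a block-triangular matrix is the union of the spectra of its diagonal blocks. Concretely, after conjugation by $P$, any product $\Pi=A_{i_1}\cdots A_{i_t}\in\F_t$ becomes block lower-triangular with diagonal blocks $\Pi^{(j)}:=A_{i_1}^{(j)}\cdots A_{i_t}^{(j)}\in\F_t^{(j)}$, whence $\rho(\Pi)=\max_{1\le j\le r}\rho(\Pi^{(j)})$. Plugging this into the generalized spectral radius formula and using the finiteness of $r\le n$ to swap the outer finite maximum with the $\limsup$ in $t$, one obtains
\[
\bar{\rho}(\F)=\max_{1\le j\le r}\bar{\rho}(\F^{(j)}),
\]
which is the desired identity after invoking the already-cited equality $\bar\rho(\cdot)=\rho(\cdot)$.

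There is no genuinely hard step; the task is induction combined with a clean bookkeeping of the basis change. The one subtlety worth flagging is that the off-diagonal $*$ blocks produced at different stages of the induction must not migrate above the diagonal, which is guaranteed because conjugation by a block-diagonal matrix preserves block lower-triangular structure, so the two inductive triangularizations on the diagonal never interfere with each other. A second minor point is that interchanging $\limsup_{t\to\infty}$ with $\max_{1\le j\le r}$ is legitimate precisely because $r$ is finite (bounded by $n$); this finiteness, built into the construction, is what keeps the JSR identity from requiring any separate analytic input.
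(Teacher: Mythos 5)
Your argument is correct. Note that the paper itself offers no proof of this lemma: it is quoted directly from the cited reference (Berger--Wang), so there is nothing internal to compare against. Your induction on $n$ with a common invariant subspace, the basis ordering that yields the block lower-triangular form, and the conjugation by $\mathrm{diag}(P_1,P_2)$ that keeps the two inductive refinements from interfering is exactly the standard construction, and your handling of the two flagged subtleties (preservation of lower-triangularity, and exchanging $\limsup_{t\to\infty}$ with the finite $\max_{1\le j\le r}$) is sound. One small observation: you establish the identity for $\bar{\rho}$ and then invoke the Berger--Wang equality $\bar{\rho}=\hat{\rho}$, which the paper has already cited; this is a clean shortcut, since proving $\hat{\rho}(\F)=\max_j\hat{\rho}(\F^{(j)})$ directly would require an extra (if routine) estimate controlling the polynomial growth of the off-diagonal blocks of long products.
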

Therefore, without loss of generality, we may always assume the matrix family $\F$ being irreducible. This leads to an important connection between the joint spectral radius and a special induced matrix norm, called extremal norm \cite{Jungers2009Book}. We further assume $\rho(\F)=1$ after normalizing $\F$ by dividing $\rho(\F)$ and also the irreducibility of $\F$ guarantees that the normalized $\F$ is non-defective, 
i.e., the semi-group of matrices generated by $\F$ is bounded,
and hence there exists an (invariant) extremal norm for $\F$ as described in the next lemma.
\begin{Lemma}[\cite{Wirth2002}]
\label{Bnorm}
 For any finite irreducible matrix family $\F$, 
 there exists a vector (Barabanov) norm $\|\cdot\|_B$ such that:
\begin{enumerate}
 \item[(1)] For all $v\in \IC^n$ and all $A\in\F$ it holds that
$
  \|Av\|_B\le \rho(\F) \|v\|_B,
$
\item[(2)] For all $v\in \IC^n$, there exists an $A\in\F$ such that
$
 \|Av\|_B=\rho(\F) \|v\|_B.
$
\end{enumerate}
In particular, the induced matrix norm $\|\cdot\|_B$ is an extremal norm satisfying
\[
  \max_{A\in\F}\|A\|_B=\rho(\F).
\]
\end{Lemma}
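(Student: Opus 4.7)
The plan is to prove this lemma in two stages: first build an extremal norm witnessing condition (1), then squeeze it into a Barabanov norm satisfying condition (2) via a monotone limit.

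After normalizing so that $\rho(\F)=1$, I would first verify that the semigroup $\bigcup_{k\ge 0}\F_k$ is bounded in every matrix norm. The standard argument is by contradiction: an unbounded sequence of products $P_k\in\F_{n_k}$, rescaled by $\|P_k\|$, has a subsequential limit $M\ne 0$ that is necessarily singular, and the range (or kernel) of $M$ turns out to be a proper subspace invariant under every element of $\F$, contradicting irreducibility. Given boundedness, fix any reference vector norm $\|\cdot\|$ and set
\begin{equation*}
N(v):=\sup_{k\ge 0}\;\max_{A\in\F_k}\|Av\|,\qquad \F_0:=\{I\}.
\end{equation*}
Then $N(v)<\infty$ by boundedness, $N(v)\ge\|v\|>0$ for $v\ne 0$, and the identity $\F\cdot\F_k\subset\F_{k+1}$ gives $N(Bv)\le N(v)$ for every $B\in\F$. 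This already produces an extremal norm with $\max_{B\in\F}\|B\|_N=1=\rho(\F)$, which handles part (1).

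For part (2), I would iterate the monotone operator $T\phi(v):=\max_{B\in\F}\phi(Bv)$ on the cone of norms. Because $TN\le N$, the sequence $T^kN$ is pointwise nonincreasing and nonnegative, so
\begin{equation*}
\|v\|_B:=\lim_{k\to\infty}(T^kN)(v)=\lim_{k\to\infty}\max_{A\in\F_k}N(Av)
\end{equation*}
is well defined. Homogeneity and subadditivity transfer directly from $N$, and $\|Bv\|_B\le\|v\|_B$ is inherited from the extremality of the iterates. The \emph{equality} clause is where finiteness of $\F$ is essential: at every level $k$ some $B_k\in\F$ attains $\max_B(T^{k-1}N)(Bv)$, so the pigeonhole principle selects one element $B\in\F$ that is optimal infinitely often, and passing to the limit along that subsequence yields $\|Bv\|_B=\|v\|_B$.

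The main obstacle I anticipate is ensuring the limit is a genuine norm rather than a seminorm. The null set $V:=\{v:\|v\|_B=0\}$ is a common invariant subspace of $\F$ via $\|Bv\|_B\le\|v\|_B$, so irreducibility forces $V=\{0\}$ or $V=\IC^n$. The latter must be excluded: $V=\IC^n$ would force $\max_{A\in\F_k}N(Av)\to 0$ pointwise, hence uniformly on the $N$-unit sphere by Dini's theorem, which would give $\max_{A\in\F_k}\|A\|_N^{1/k}\to 0$ and contradict $\rho(\F)=1$ via the characterization $\rho(\F)=\lim_{k\to\infty}\max_{A\in\F_k}\|A\|_N^{1/k}$. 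A secondary technical point, continuity of $\|\cdot\|_B$, follows painlessly from the equi-Lipschitz property of the $T^kN$ inherited from $N$; the final extremal identity $\max_{A\in\F}\|A\|_B=\rho(\F)$ then combines the inequality $\|A\|_B\le 1$ with the lower bound $\rho(\F)\ge\max_{A\in\F_k}\rho(A)^{1/k}$ applied to a spectrum-maximizing sequence.
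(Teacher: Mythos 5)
The paper does not prove this lemma at all: it is imported verbatim from Wirth's 2002 paper (ultimately Barabanov's theorem), so there is no internal argument to compare against. Your proposal is essentially a reconstruction of the standard construction, and most of it is sound: building the extremal norm $N(v)=\sup_k\max_{A\in\F_k}\|Av\|$ from the bounded semigroup, iterating $T\phi(v)=\max_{B\in\F}\phi(Bv)$ to get a pointwise nonincreasing sequence $T^kN$, extracting the equality clause by pigeonhole over the finite set $\F$, ruling out the degenerate seminorm by irreducibility of the null space plus Dini's theorem, and getting continuity from the uniform Lipschitz bound $|T^kN(u)-T^kN(v)|\le N(u-v)$. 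All of that works.

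The one genuine gap is in the step you dispatch in a single sentence: that irreducibility plus $\rho(\F)=1$ forces the semigroup to be bounded. Your justification --- that a rescaled subsequential limit $M=\lim P_k/\|P_k\|$ is singular and that ``the range (or kernel) of $M$'' is a proper $\F$-invariant subspace --- is not correct as stated. The singularity of $M$ is fine (since $|\det P_k|\le\rho(P_k)^n\le 1$ while $\|P_k\|\to\infty$), but the range of a single limit point is not invariant: for $A\in\F$ one only gets $A\,\mathrm{range}(M)=\mathrm{range}(AM)$, and $AM$ is (a scalar multiple of) a \emph{different} limit point, not $M$ itself. The standard argument (Elsner, Barabanov) must therefore work with the span of the ranges of \emph{all} such limit points, which is invariant but no longer obviously proper, and closing that loop is the real content of the non-defectiveness theorem --- it is precisely where irreducibility does its hardest work. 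As written, your proof of part (1) rests on an invariance claim that fails, so you should either supply the full Elsner-type argument or cite non-defectiveness of irreducible families as a known result, as the paper itself effectively does.
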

We are ready to prove the finiteness property of irreducible rank-one matrix family.
\begin{Theorem}
\label{rank1finite1}
Let $\F=\{A_i=x_iy_i^*:i=1,2,\cdots,m\}\subset\IC^{n\times n}$ be an irreducible rank-one matrix family.
Then $\F$ has the finiteness property and
the corresponding spectral maximizing product sequence of minimal length has distinct factors.
\end{Theorem}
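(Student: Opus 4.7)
The plan is to combine the Barabanov norm from Lemma~\ref{Bnorm} with the rank-one geometry of $\F$, and then to use a cyclic-trace shortening argument to force distinctness. After noting that the case $\rho(\F)=0$ is trivial by (\ref{3ineq}), I will normalize so that $\rho(\F)=1$ and invoke Lemma~\ref{Bnorm} to obtain a Barabanov norm $\|\cdot\|_B$ satisfying $\|Av\|_B \le \|v\|_B$ for every $A \in \F$ and $v \in \IC^n$, with equality realizable at each $v$ by some $A \in \F$.

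Starting from any nonzero $v_0$, I will recursively select $A_{i_k}\in\F$ achieving equality at $v_{k-1}$ and set $v_k := A_{i_k} v_{k-1}$, so that $\|v_k\|_B = \|v_0\|_B$ for every $k \ge 0$. The decisive rank-one input is $v_k = (y_{i_k}^* v_{k-1})\, x_{i_k} \in \IC \cdot x_{i_k}$, so for $k \ge 1$ every $v_k$ lies in one of the $m$ complex lines $\IC \cdot x_1,\dots,\IC\cdot x_m$. Pigeonhole on $v_1,\dots,v_{m+1}$ then yields indices $1\le j<j'\le m+1$ with $v_j$ and $v_{j'}$ on the same line, hence $v_{j'}=\beta v_j$ with $|\beta|=1$ (the $\|\cdot\|_B$-norms agree). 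Thus $P := A_{i_{j'}}\cdots A_{i_{j+1}}$ has $v_j$ as an eigenvector with eigenvalue $\beta$, giving $\rho(P)^{1/(j'-j)} \ge 1 = \rho(\F)$; the matching upper bound is automatic from the definition of the generalized spectral radius, so $P$ is spectral-maximizing and the finiteness property holds.

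To upgrade this to a product with pairwise distinct factors, I will use the trace identity $\rho(A_{i_1}\cdots A_{i_k}) = |\alpha_{i_1 i_2}\alpha_{i_2 i_3}\cdots \alpha_{i_k i_1}|$ with $\alpha_{ab} := y_a^* x_b$, valid for products of rank-one matrices. If a spectral-maximizing product of length $k$ has a repetition $i_p = i_q$ with $p<q$, the cyclic word $(i_1,\dots,i_k)$ splits into two shorter cyclic words of lengths $s=q-p$ and $t=k-s$ whose $|\alpha|$-weights multiply to that of the original. The elementary observation that $a^{1/s}<c$ and $b^{1/t}<c$ together force $(ab)^{1/(s+t)}<c$ then guarantees that at least one sub-cycle has geometric mean $\ge \rho(\F)$, giving a strictly shorter spectral-maximizing product and contradicting the minimality of $k$. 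I expect the main obstacle to be the pigeonhole step: two \emph{distinct} indices can give proportional $x$-vectors, so the identification $v_{j'}=\beta v_j$ must be read projectively rather than at the level of labels; this is precisely why the orbit argument only provides \emph{some} spectral-maximizing product and the cyclic shortening is needed to deliver the distinct-factor conclusion.
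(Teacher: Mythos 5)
Your argument is correct. The first half is essentially the paper's proof: normalize to $\rho(\F)=1$, run a norm-preserving orbit under the Barabanov norm of Lemma~\ref{Bnorm}, use the one-dimensionality of the ranges to force two orbit points onto the same line, and read off an eigenvalue of modulus $\rho(\F)$ for the intervening product, with the reverse inequality coming from (\ref{3ineq}). Where you genuinely diverge is the distinct-factor claim. The paper gets distinctness for free from the pigeonhole itself: it defines $s$ as the longest repetition-free multi-index realizing the norm equalities, so that extending by one step creates exactly one repetition $i_{s+1}=i_j$, and the extracted cycle $A_{i_{s+1}}\cdots A_{i_{j+1}}$ automatically has pairwise distinct indices. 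You instead extract an arbitrary maximizing cycle and then shorten it using the trace identity $\rho(A_{i_1}\cdots A_{i_k})=|\alpha_{i_1i_2}\cdots\alpha_{i_ki_1}|$ and the splitting of a cyclic word at a repeated letter into two sub-cycles whose weights multiply; the elementary geometric-mean inequality then forces one sub-cycle to be maximizing, contradicting minimality. This is precisely the maximum-cycle-mean viewpoint the paper credits to Ahmadi and Parrilo in the remark following the theorem, and it buys you something slightly stronger: \emph{every} minimal-length spectral maximizing product has distinct factors, not merely the one produced by the construction, which also justifies formula (\ref{rank1formula}) directly. The paper's route is shorter but proves only the existence of one such product. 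Your self-identified caveat (proportional $x$-vectors under distinct labels) is handled correctly by your cyclic shortening; just make sure you state explicitly that the $\rho(\F)=0$ case is dispatched first, since the trace identity argument divides by $c=\rho(\F)$.
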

\begin{proof}
We first normalize $\F$ such that $\rho(\F)=1$.
By Lemma \ref{Bnorm}, for any given $v\in \IC^n$ with $\|v\|_B=1$,
then for any $k\ge 1$ there exists a multi-index $(i_1,i_2,\cdots,i_k)$ such that
\begin{equation}
\label{bnorm-s}
  1=\|v\|_B=\|A_{i_1}v\|_B=\|A_{i_2}A_{i_1}v\|_B=\cdots=\|A_{i_k}\cdots A_{i_2}A_{i_1}v\|_B.
\end{equation}
By the pigeonhole principle, if $k\ge (m+1)$, then 
the multi-index $(i_1,i_2,\cdots,i_k)$ has at least one repeated index.
We define $s$ to be the maximum of those $k$'s such that 
the corresponding multi-index $(i_1,i_2,\cdots,i_k)$ satisfying (\ref{bnorm-s}) has no repetition.
It is obvious that $s\le m$.
Then, choosing $k=s+1$ in (\ref{bnorm-s}) gives $i_{s+1}=i_{j}$ for some unique $1\le j\le s$, that is,
 \[
 1=\|v\|_B=\cdots=\|A_{i_j}\cdots A_{i_1}v\|_B=\cdots=\|A_{i_{s+1}}A_{i_{s}}\cdots A_{i_j}\cdots A_{i_1}v\|_B.
\]
Since $A_{i_{s+1}}=A_{i_j}$ is rank-one, its range is one-dimensional and hence
\[
 A_{i_j}\cdots A_{i_1}v=\alpha z \tn{and} A_{i_{s+1}}A_{i_{s}}\cdots A_{i_j}\cdots A_{i_1}v=\beta z
\]
for some $0\ne\alpha\in\IC, 0\ne \beta\in\IC$, and $0\ne z\in\IC^n$
(we may choose $z=x_{i_j}$ here).
Then
\[
 \|\alpha z\|_B=\|A_{i_j}\cdots A_{i_1}v\|_B=1
=\|A_{i_{s+1}}A_{i_{s}}\cdots A_{i_j}\cdots A_{i_1}v\|_B=\|\beta z\|_B,
\]
which gives $|\alpha|=|\beta|$.
Finally, we obtain
\begin{align*}
 \beta z&=A_{i_{s+1}}A_{i_{s}}\cdots A_{i_{j+1}}(A_{i_j}\cdots A_{i_1}v)
=A_{i_{s+1}}A_{i_{s}}\cdots A_{i_{j+1}}(\alpha z)
\end{align*}
and hence
\[
 A_{i_{s+1}}A_{i_{s}}\cdots A_{i_{j+1}} z=\frac{\beta}{\alpha}z,
\]
where $\frac{\beta}{\alpha}$ is an eigenvalue of product $ A_{i_{s+1}}A_{i_{s}}\cdots A_{i_{j+1}}$.
Therefore, by Lemma \ref{Bnorm}, we get
$$1\ge \|A_{i_{s+1}}A_{i_{s}}\cdots A_{i_{j+1}}\|_B\ge 
\rho(A_{i_{s+1}}A_{i_{s}}\cdots A_{i_{j+1}})\ge |\frac{\beta}{\alpha}|=1,$$
which proves that $\F$ has the finiteness property with
\[
 \rho(\F)=1=\rho(A_{i_{s+1}}A_{i_{s}}\cdots A_{i_{j+1}})^{1/(s-j+1)},
\]
where $1\le (s-j+1)\le m$ and $i_{s+1}\ne i_s \ne \cdots \ne i_{j+1}$ by the choice of $s$.
\end{proof}

We remark here that Theorem \ref{rank1finite1} provides us an important structure of a spectral maximizing sequence, which will greatly improve the efficiency
of specially designed search algorithms. 
In particular, non-repeated index indicates that the lengths of all minimal spectral maximizing sequences
will not be longer than $m$. 
In fact, the possible minimal spectral maximizing sequence with longest length 
is $A_{i_1}A_{i_2}\cdots A_{i_m}$ with $i_s\ne i_t$ when $s\ne t$.  
In summary, an explicit formula for the JSR of any rank-one matrix family is given by 
$\F=\{A_1,A_2,\cdots,A_m\}\subset\IC^{n\times n}$ as
\begin{equation}
 \label{rank1formula}
\rho(\F)
=\max_{1\le k\le m}\left( \max_{A\in \F^{(*)}_k} \rho(A)^{1/k}\right),
\end{equation}
where $\F^{(*)}_k=\{A_{i_1}A_{i_2}\cdots A_{i_k}\in \F_k:\; i_s\ne i_t\quad \hbox{when}\; s\ne t\}$
denotes all possible  products in $\F_k$ with distinct factors.

Ahmadi and Parrilo recently show the same result by using the maximum cycle approach in graph theory \cite{Ahmadi2012}. 
In their work, they indicate that by using the well-known Karp algorithm (or its improved version) the computation of $\rho(\F)$ can be achieved in a polynomial computational time with $O(m^3+m^2n)$ complexity.

The following corollary extends our recent result appeared in \cite{Dai2011}. Morris has shown a broader version in his recent work \cite{Morris2011}.
\begin{Corollary} Let $\F=\{A_1, A_2, \dots, A_m\}\subset\IC^{n\times n}$ be irreducible with $A_j$ being rank-one for $2\le j\le m$, then $\F$ has the finiteness property. 
\end{Corollary}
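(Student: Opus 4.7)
The plan is to extend the Barabanov-norm argument of Theorem~\ref{rank1finite1}, carefully isolating the role of the single (possibly higher-rank) matrix $A_1$. First I would normalize so that $\rho(\F)=1$ (the case $\rho(\F)=0$ being trivial as before) and invoke Lemma~\ref{Bnorm} to obtain an invariant extremal (Barabanov) norm $\|\cdot\|_B$ with $\max_i\|A_i\|_B=1$. Starting from any unit vector $v^{(0)}$, repeated application of property~(2) of Lemma~\ref{Bnorm} produces an infinite orbit $v^{(t)}=A_{i_t}v^{(t-1)}$ with $\|v^{(t)}\|_B=1$ for all $t\ge 0$. I would then split into two exhaustive cases according to whether any of the rank-one matrices $A_j$, $j\ge 2$, is used infinitely often along such an orbit.

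In the first case, suppose some rank-one $A_j$ with $j\ge 2$ appears at positions $p_1<p_2<\cdots$ in the orbit. Since $A_j=x_jy_j^*$, each $v^{(p_k)}$ lies on the line $\IC x_j$, say $v^{(p_k)}=\alpha_k x_j$, and the Barabanov condition forces $|\alpha_k|=1/\|x_j\|_B$ to be constant. The block product $Q_k=A_{i_{p_{k+1}}}\cdots A_{i_{p_k+1}}$ then satisfies $Q_kx_j=(\alpha_{k+1}/\alpha_k)x_j$, exhibiting a unit-modulus eigenvalue. Submultiplicativity of $\|\cdot\|_B$ gives $\rho(Q_k)\le\|Q_k\|_B\le 1$, so $\rho(Q_k)=1$, and with $\ell=p_{k+1}-p_k$ one concludes $\rho(\F)=1=\rho(Q_k)^{1/\ell}$, establishing the finiteness property.

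In the opposite case, every Barabanov orbit uses the rank-one matrices only finitely often, so there exists a time $t_0$ after which only $A_1$ is applied. Setting $u=v^{(t_0)}$, we obtain a unit vector with $\|A_1^ku\|_B=1$ for all $k\ge 0$. I would then argue indirectly via the spectral radius: if $\rho(A_1)<1$, Gelfand's formula forces $A_1^k\to 0$ in operator norm and hence $A_1^ku\to 0$, contradicting $\|A_1^ku\|_B=1$. Therefore $\rho(A_1)\ge 1$, and combined with $\rho(A_1)\le\|A_1\|_B\le 1$ this yields $\rho(A_1)=1=\rho(\F)$, so that $A_1$ itself is the desired spectral-maximizing product of length one.

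The hard part will be the second case. The proof of Theorem~\ref{rank1finite1} exploits the rank-one property of every matrix to collapse sub-products into one-dimensional ranges and read off a unit-modulus eigenvalue directly; with a single higher-rank $A_1$ in the mix this direct extraction fails, and one is forced into the orbit-versus-spectral-radius dichotomy above. The hypothesis that \emph{at most one} matrix has rank greater than one enters precisely here: it guarantees that once the rank-one matrices stop appearing along a norm-preserving orbit, the only remaining matrix in play is $A_1$, so the final dichotomy concerns the powers of a single operator rather than a higher-dimensional semigroup.
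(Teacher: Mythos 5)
Your proposal is correct and uses essentially the same argument as the paper: a norm-preserving Barabanov orbit, extraction of a unit-modulus eigenvalue from the one-dimensional range when a rank-one factor recurs, and the observation that $\rho(A_1)<1$ would force $\|A_1^k\|_B\to 0$. The only difference is organizational — you run a direct two-case dichotomy, while the paper assumes the finiteness property fails (so $\rho(A_1)<1$) and derives a contradiction.
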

\begin{proof} If $A_1$ is rank-one, then we are done by Theorem 3. Thus we assume the rank of $A_1$ is greater than one.
 
We first normalize $\F$ such that $\rho(\F)=1$
and then prove the conclusion by contradiction. Suppose the finiteness property does not hold. Then we have $\rho(A_j)< 1$ for $1\le j\le m$.

According to Lemma \ref{Bnorm},  for a given $v\in \IC^n$ with $\|v\|_B=1$,
then for any $k\ge 1$ there exists a sequence 
\[A_{i_k}\cdots A_{i_2}A_{i_1}\]
such that
\begin{equation}
\label{bnorm-s2}
  1=\|v\|_B=\|A_{i_1}v\|_B=\|A_{i_2}A_{i_1}v\|_B=\cdots=\|A_{i_k}\cdots A_{i_2}A_{i_1}v\|_B.
\end{equation}
When $k$ is large enough, the sequence $A_{i_k}\cdots A_{i_2}A_{i_1}$ has to be rank-one, since otherwise we have
\[\|A_{i_k}\cdots A_{i_2}A_{i_1}v\|_B=\|A_1^kv\|_B \leq \|A_1^k\|_B\to 0\]
as $k\to \infty$ due to $\rho(A_1)<1$.
Thus an index $i_j\in \{2, 3, \dots, m\}$ must appear at least twice as $k$ becomes sufficiently large. 
Let's consider the sequence 
\[A_{i_k}\cdots A_{i_{j+1}}A_{i_j}A_{i_{j-1}}\cdots A_{i_2}A_{i_1}\]
with $A_{i_k}=A_{i_j}$. 
Notice that the both finite products  $A_{i_k}\cdots A_{i_{j+1}}$ and $A_{i_j}A_{i_{j-1}}\cdots A_{i_2}A_{i_1}$ have the same (one-dimensional) range space. Hence there exist $z\in\IC^n, z\ne 0$, $\alpha\in\IC$ such that
\[
 A_{i_j}A_{i_j-1}\cdots A_{i_2}A_{i_1} v=\alpha z, \quad \|\alpha z\|_B=1
\]
and $\beta\in\IC$ such that
\[
A_{i_k}\cdots A_{i_{j+1}}(\alpha z)=\beta z, \quad \|\beta z\|_B=1.
\]
This implies that 
\[
A_{i_k}\cdots A_{i_{j+1}}z=\frac{\beta}{\alpha} z.
\]
Notice that $\|\alpha z\|_B=1=\|\beta z\|_B$ gives $|\alpha|=|\beta|$ since $\|z\|_B\ne 0$, we thus have
\[\rho(A_{i_k}\cdots A_{i_{j+1}})=1=\rho(\F)\]
which leads to a contradiction.
\end{proof}
Similar to the previous case, the JSR of $\F$ can be computed by
\begin{equation}
\label{rank1formula2}
\rho(\F)=\max\Big\{\rho^*,\max_{1\le i\le m}  \rho(A_i) \Big\}
\end{equation}
where
\[\rho^*=\max_{n_{i_j}\ge 0,  0<\ell\le m}\rho^{\frac{1}{n_1+\cdots+n_\ell+\ell}}(A_1^{n_1}A_{i_1}A_1^{n_2}A_{i_2}\cdots A_{n_\ell}^{n_\ell}A_{i_{\ell}})\]
with $2\le i_j\le m$ and $i_s\ne i_t$ when $s\ne t$. Again the Karp algorithm is still applicable for this case due to the existence of the maximum cycle according to our proof.

\subsection{Theoretical Examples}
In this subsection, we verify our foregoing results by some toy examples.
The formula (\ref{rank1formula}) provides a straightforward way to calculate the JSR for a rank-one matrix family.
The search of all possible products with distinct factors of length not exceeding $m$  
is sufficient to obtain the exact value of $\rho(\F)$.
However, most of current numerical approximation methods can only provide lower and upper bounds
for JSR with no indication whether the JSR has been achieved.
In particular, our formula (\ref{rank1formula}) is fully validated by the reported spectral maximizing sequences 
for any pair of rank-one $2\times 2$ sign-matrices in \cite{Cicone2010}.

\begin{example}[\cite{Cicone2010}]
 Consider the rank-one matrix pair
\[\F=\set{
A_1=\bmat{1& 1\\-1& -1},
A_2=\bmat{0& 1\\0& 1}
}.
\]
\end{example}
Apply the formula (\ref{rank1formula}) to obtain 
\[
\rho(\F)=\max_{1\le k\le 2} \max_{A\in \F^{(*)}_k} \rho(A)^{1/k}=\max\{\rho(A_1),\rho(A_2),\rho(A_1A_2)^{1/2}\}=\sqrt{2}.
\]
While in \cite{Cicone2010} this was solved by constructing an extremal real polytope norm.
\begin{example}[\cite{Guglielmi2005}]
Consider the rank-one matrix family
\[\F=\set{
A_1=\bmat{1& 1\\0& 0},
A_2=\bmat{0& 0\\1& 1},
A_{3}=\bmat{\frac{1}{2}& \frac{1}{2}\\ \frac{1}{2}& \frac{1}{2}},
A_{4}=\bmat{\frac{2}{3}& 0\\ \frac{-2}{3}& 0}
}.
\]
\end{example}
Using the formula (\ref{rank1formula}) to get 
\[
\rho(\F)=\max_{1\le k\le 4} \max_{A\in \F^{(*)}_k} \rho(A)^{1/k}=1.
\]
The same conclusion was derived in \cite{Guglielmi2005} by observing relations
among all matrices. But this approach is hard to be applied to general cases.
\begin{example}
 Consider the matrix family
\[\F=\set{
A_1=\bmat{1& 1&0\\0&1&1\\ 0&0&1},
A_2=\bmat{0& 0&1\\0&0&0\\ 0&0&0},
A_3=\bmat{0& 0&0\\0&0&0\\ 1&0&0}
}.
\]
\end{example}
A straightforward calculation based on formula (\ref{rank1formula2}) gives
\[
\rho(\F)=\rho(A_1^6A_3)^{1/7}=15^{1/7}\approx  1.472356700180347.
\]
\section{Rank-one approximation of JSR}
Although the JSR formula for rank-one matrix family is now available,
its applicability is highly restricted since rank-one matrix family 
rarely occurs in practice. 
Therefore, in following two subsections we develop an approximation approach to
expand its horizon of application. 
This method imitates the conventional definition of $\rho(\F)$ in terms of limit superior
and provides a new viewpoint on the approximation of JSR. 
The main idea results from the fact that the rank of any matrix products for a given set of matrices does not increase 
as the multiplication continues. This property provides us a feasible approach.

\subsection{General matrix family}
In this subsection, we will introduce a natural and insightful way of approximating $\rho(\F)$
by utilizing the previous results on rank-one matrix family.
The key idea is to perform the rank-one approximation of $\F_k$ successively as $k$ increases.
Let $A\in\IC^{n\times n}$, from its singular value decomposition (SVD) we have the following rank-one decomposition
\[
 A=\sum_{i=1}^n\sigma_i u_i v_i^*,
\]
where  $\sigma_1\ge \sigma_2\ge\cdots\ge\sigma_n\ge 0$ are the singular values, and
$u_i$ and $v_i$ are the $i$th left and right singular vector, respectively.
The customary best rank-one approximation of $A$ is trying to minimize $\|A-R\|_F$ 
over all rank-one matrices $R$, which is achieved by choosing $R=\sigma_1 u_1 v_1^*$.
For our approach, we will choose a special candidate, denoted by $\Rk(A)$, 
which maximizes the absolute value of its trace (or spectral radius), i.e.,
\[
 \Rk(A)\equiv\sigma_{i'} u_{i'} v_{i'}^*=\arg \max_{1\le i\le n} \left|\tr(\sigma_i u_i v_i^*)\right|
=\arg \max_{1\le i\le n} \left|\rho(\sigma_i u_i v_i^*)\right|
\]
as the rank-one approximation of $A$. Clearly $\sigma_{i'}\le \sigma_1=\|A\|_2$.
For the convenience of further discussion, 
we denote the element-wise rank-one approximation of
\[\F=\set{A_1,A_2,\ldots,A_m}\subset\IC^{n\times n}\]
by
\[\Rk(\F)=\set{\Rk(A_1),\Rk(A_2),\ldots,\Rk(A_m)},\]
where $\Rk(A_i)$ is the rank-one approximation of $A_i$ as defined above.
Notice that $\Rk(\F_k)$ is a finite rank-one matrix family,
thus $\rho(\Rk(\F_k))$  can be obtained by the  formula (\ref{rank1formula}).
Our next result needs the trace characterization of JSR by Chen and Zhou \cite{Chen2000} 
\begin{equation}
\label{jsrtrace}
\rho(\F)=\limsup_{k\to\infty}\max_{A\in\F_k}|\tr(A)|^{1/k}.
\end{equation}

\begin{Theorem}
\label{rankonesup}
For any finite matrix family $\F=\set{A_1,A_2,\ldots,A_m}\subset \Cnn$, there holds
 \bea
\label{rank1sup}
  \rho(\F)=\limsup_{k\to\infty}\rho(\Rk(\F_k))^{1/k}.
 \eea
\end{Theorem}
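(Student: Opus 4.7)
The plan is to prove both inequalities separately, using two classical characterizations of JSR: the spectral-norm characterization $\rho(\F)=\lim_{k\to\infty}\max_{A\in\F_k}\|A\|_2^{1/k}$ for the upper bound, and the trace characterization (\ref{jsrtrace}) of Chen--Zhou for the lower bound. The bridge connecting $\F_k$ to $\Rk(\F_k)$ in each direction is the SVD $A=\sum_{i=1}^n\sigma_i u_i v_i^*$ together with the definition of $\Rk(A)$ as the rank-one SVD-summand maximizing $|\tr(\sigma_i u_i v_i^*)|=|\sigma_i v_i^*u_i|$.

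For the upper bound $\limsup_{k\to\infty}\rho(\Rk(\F_k))^{1/k}\le\rho(\F)$, I would first note that since $\Rk(A)=\sigma_{i'}u_{i'}v_{i'}^*$ is a single SVD term of $A$, its spectral norm $\|\Rk(A)\|_2=\sigma_{i'}$ is bounded by $\sigma_1=\|A\|_2$. Applying the variational characterization (\ref{infnorm}) to $\Rk(\F_k)$ with the sub-multiplicative spectral norm yields
\[
\rho(\Rk(\F_k))\le\max_{B\in\Rk(\F_k)}\|B\|_2=\max_{A\in\F_k}\|\Rk(A)\|_2\le\max_{A\in\F_k}\|A\|_2.
\]
Taking $k$-th roots and letting $k\to\infty$ gives $\rho(\Rk(\F_k))^{1/k}\le\max_{A\in\F_k}\|A\|_2^{1/k}\to\rho(\F)$, which yields the upper bound.

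For the lower bound $\rho(\F)\le\limsup_{k\to\infty}\rho(\Rk(\F_k))^{1/k}$, the key inequality is that for any $A\in\Cnn$ with SVD as above,
\[
|\tr(A)|=\Bigl|\sum_{i=1}^n\sigma_i v_i^*u_i\Bigr|\le\sum_{i=1}^n|\sigma_i v_i^*u_i|\le n\max_{1\le i\le n}|\sigma_i v_i^*u_i|=n\,\rho(\Rk(A)),
\]
where the last equality uses that $\Rk(A)$ is rank-one so $\rho(\Rk(A))=|\tr(\Rk(A))|$, together with the maximizing choice of the index $i'$ in the definition of $\Rk$. Since $\Rk(A)\in\Rk(\F_k)$, inequality (\ref{3ineq}) with length one gives $\rho(\Rk(A))\le\rho(\Rk(\F_k))$. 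Combining these yields $|\tr(A)|^{1/k}\le n^{1/k}\rho(\Rk(\F_k))^{1/k}$ for every $A\in\F_k$. Taking the max over $A\in\F_k$ and then the $\limsup$ in $k$, and invoking the trace characterization (\ref{jsrtrace}), produces
\[
\rho(\F)=\limsup_{k\to\infty}\max_{A\in\F_k}|\tr(A)|^{1/k}\le\limsup_{k\to\infty}n^{1/k}\rho(\Rk(\F_k))^{1/k}=\limsup_{k\to\infty}\rho(\Rk(\F_k))^{1/k}.
\]

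Neither direction presents a real obstacle once the right characterizations are in hand; the entire content of the theorem is the simple two-sided SVD-based estimate $|\tr(A)|\le n\,\rho(\Rk(A))$ and $\|\Rk(A)\|_2\le\|A\|_2$. The only subtle point worth flagging is that one must use the trace-based formula for $\rho(\F)$ rather than the more familiar norm-based one, because the norm-based limit would only give $\rho(\F)\le\limsup \max_{A\in\F_k}\|\Rk(A)\|_2^{1/k}$ in the wrong direction; the factor of $n$ in the trace bound is harmless since $n^{1/k}\to 1$, which is precisely what makes the trace characterization the right tool here.
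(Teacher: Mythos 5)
Your proof is correct and follows essentially the same route as the paper's: the upper bound via the spectral norm and the estimate $\|\Rk(A)\|_2\le\|A\|_2$, and the lower bound via the Chen--Zhou trace characterization (\ref{jsrtrace}) together with $|\tr(A)|\le n\,\rho(\Rk(A))$ and the vanishing factor $n^{1/k}$. The only (harmless) difference is in the upper bound, where you invoke the generic inequality $\rho(\Rk(\F_k))\le\max_{B\in\Rk(\F_k)}\|B\|_2$ directly, while the paper expands products of the rank-one approximants and uses $|v_j^*u_j|\le 1$ to reach the same intermediate bound $\rho(\Rk(\F_k))\le\max_{A\in\F_k}\|A\|_2$.
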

\begin{proof}
Let $\|\cdot\|_2$ denotes the spectral matrix norm. 
To derive our conclusion, we will prove (\ref{rank1sup}) by validating two inequalities as shown below.

Firstly, given any $k\ge 1$, for the finite rank-one matrix family $\Rk(\F_k)$, we have
\[
 \rho(\Rk(\F_k))=\max_{1\le l\le m^k}\max_{R\in \Rk(\F_k)_{l}} \rho(R)^{1/l},
\]
where $m^k$ is the cardinality of the set $\Rk(\F_k)$.
For any $R \in \Rk(\F_k)_{l}$, 
there exist $l$ matrices $B_j\in \F_k$ for  $1\le j\le l$ such that 
\[
 R=\Pi_{j=1}^{l} \Rk(B_j)=\Pi_{j=1}^{l} \sigma_j u_j v_j^* ,
\]
where $\Rk(B_j)=\sigma_j u_j v_j^*$ is the rank-one approximation of $B_j$ defined above.
By Cauchy inequalities $|v_j^*u_j|\le 1$ for  $1\le j\le l$ there holds 
$$\rho(R)\le \Pi_{j=1}^{l} \sigma_j \le \Pi_{j=1}^{l} \|B_j||_2\le
 \left(\max_{A\in\F_k} \|A||_2\right)^{l}$$
for any $R \in \Rk(\F_k)_{l}$.
Thus we have
\bean
 \rho(\Rk(\F_k))\le \max_{1\le l\le m^k} \left(\max_{A\in\F_k} \|A||_2\right)
= \max_{A\in \F_k} \|A\|_2,
\eean
which leads to
\bea
 \limsup_{k\to\infty}\rho(\Rk(\F_k))^{1/k}
&\le & \limsup_{k\to\infty}\left(\max_{A\in  \F_k}  \|A\|_2\right)^{1/k}=\rho(\F).
\label{ineq1}
\eea
Secondly,
for any $k\ge 1$, let 
$$B_{k}=\arg \max_{A\in\F_{k}}|\tr(A)|.$$
After expressing the SVD of $B_k$ as
$$
 B_{k}=\sum_{i=1}^{n}\hat\sigma_{i} \hat u_{i} \hat v_{i}^*,
$$
the linearity of trace operator gives
\bean
 |\tr(B_{k})|\le \sum_{i=1}^{{n}}|\tr(\hat\sigma_{i} \hat u_{i} \hat v_{i}^*)|
\le {n} \left(\max_{1\le i\le {n}} \left|\tr(\hat\sigma_{i} \hat u_{i} \hat v_{i}^*)\right|\right)
={n} \rho(\Rk(B_{k})).
\eean
Therefore, for $k\ge 1$ we have
\bean
\rho(\Rk(\F_{k}))^{1/{k}}\ge \rho(\Rk(B_{k}))^{1/{k}}\ge \left(n^{-1}|\tr(B_{k})|\right)^{1/{k}}= n^{-1/{k}}\left( \max_{A\in\F_{k}}|\tr(A)|\right)^{1/k},
\eean
which, in together with the equality (\ref{jsrtrace}), gives
\bea
\label{ineq2}
\limsup_{k\to\infty}\rho(\Rk(\F_k))^{1/k}&\ge&
\limsup_{k\to\infty} n^{-1/{k}}\left( \max_{A\in\F_{k}}|\tr(A)|\right)^{1/k}=\rho(\F).
\eea

Finally, by combining (\ref{ineq1}) and (\ref{ineq2}), the proof is completed.
\end{proof}
The above result reveals that the rank-one approximation of a matrix family will
approximate its JSR in the sense of limit superior.

In order to demonstrate the effectiveness of our approach in some important cases, let's first consider 
the following well-known $2\times 2$ irreducible matrix pair
\[\F=\set{
A_1=\bmat{1& 1\\0& 1},
A_2=b\bmat{1& 0\\1& 1}
}
\]
with $b>0$, which was recently employed to disprove the finiteness conjecture for some $b\in (0, 1)$ \cite{Blondel2003}.
The authors in \cite{Guglielmi2008} show 
by constructing an exact real polytope extremal norm along with computational investigation that
$\rho(\F)=\sigma_1 \sqrt{b}$ when $b\in[\frac{4}{5},1]$ and the minimal spectral maximizing sequence is $A_1A_2$. 

Note that the SVD of $A_1$ (calculated by Mathematica 8) is given by
\bean
 A_1&=&\bmat{u_1& u_2} 
\bmat{
\sigma_1 & 0 \\
 0 &\sigma_2
}
\bmat{v_1 &v_2}^*
\\
&=&\bmat{
 \frac{1+\sqrt{5}}{\sqrt{2 \left(5+\sqrt{5}\right)}} & \frac{1-\sqrt{5}}{\sqrt{10-2 \sqrt{5}}} \\
 \sqrt{\frac{2}{5+\sqrt{5}}} & \sqrt{\frac{1}{10} \left(5+\sqrt{5}\right)}
}
\bmat{
 \frac{\sqrt{5}+1}{2} & 0 \\
 0 & \frac{\sqrt{5}-1}{2}
}
\bmat{
 \frac{-1+\sqrt{5}}{\sqrt{10-2 \sqrt{5}}} & -\frac{1+\sqrt{5}}{\sqrt{2 \left(5+\sqrt{5}\right)}} \\
 \sqrt{\frac{1}{10} \left(5+\sqrt{5}\right)} & \sqrt{\frac{2}{5+\sqrt{5}}}
}^*,
\eean
and thus the SVD of $A_2$ is
\[
 A_2=bA_1^*=\bmat{v_1& v_2} 
\bmat{
b\sigma_1 & 0 \\
 0 &b\sigma_2
}
\bmat{u_1 &u_2}^*.
\]
Direct calculation yields $u_1^* u_1=v_1^* v_1=1$ and $v_1^*u_1=u_1^*v_1=\frac{2}{\sqrt{5}}$.
Hence, the rank-one approximation of $\F$ based on SVD  is 
\[\Rk(\F_1)=\Rk(\F)=\set{
\sigma_1 u_1 v_1^*,
b\sigma_1 v_1 u_1^*
}.
\]
According to (\ref{rank1formula}), if $b\in [\frac{4}{5},1]$ then there holds
\bean
\rho(\Rk(\F_1))&=&\max\set{\sigma_1 v_1^*u_1,b\sigma_1 u_1^*v_1,\sigma_1\sqrt{b}\sqrt{u_1^* u_1\cdot v_1^* v_1}}\\
&=&\sigma_1 \cdot \max\set{ \frac{2}{\sqrt{5}},b \frac{2}{\sqrt{5}}, \sqrt{b}}=\sigma_1 \sqrt{b}=\rho(\F).
\eean
Here only one step of rank-one approximation gives $\rho(\F)$.

\subsection{Nonnegative matrix family}
In numerical implementation, it would be more favorable to have a limit rather than a limit superior
in (\ref{rank1sup}) because the former can be evidently observed within sufficient steps of approximations. In this subsection, we further develop the limit property for nonnegative matrix family.

In this paper by the notation $A\ge 0$ and $A>0$ we mean the matrix $A$ 
is nonnegative and positive in {\it entry-wise} sense, respectively.
If the considering matrix family $\F=\set{A_1,A_2,\ldots,A_m}\subset \IR^{n\times n}$ is nonnegative, i.e., $A_i\ge 0$,
there is an elegant limit expression of JSR established by Blondel and Nesterov \cite{Blondel2005}
\bea
\label{jsrkron}
\rho(\F)=\lim_{k\to\infty}\rho^{1/k}(A_1^{\otimes k}+\ldots +A_m^{\otimes k}),
\eea
where $A_i^{\otimes k}$ represents the $k$-th Kronecker power of $A_i$.  This expression will play an important role in our following approach. For square matrices of the same size,  the following properties for Kronecker product can be found in a standard matrix analysis textbook:
\begin{itemize}
\item[{\rm (i)}] $(A_i\otimes{A_j})(A_s\otimes{A_t})=(A_iA_s)\otimes{(A_jA_t)};$
\item[{\rm (ii)}] ${\rm tr}\left(A_{i_1}A_{i_2}\cdots A_{i_{\ell}}\right)^{\otimes k}={\rm tr}^{k}\left(A_{i_1}A_{i_2}\cdots A_{i_{\ell}}\right)$ for any positive integer $k$;
\item[\rm(iii)]  $\left(A^{\otimes k}\right)^{\ell}=\left(A^{\ell}\right)^{\otimes k}$ for any positive integers $k, \ell$.
\end{itemize}

Recall that a square matrix $A\ge 0$ is called to be primitive if $A^{\ell}>0$ for some integer $\ell\ge 1$. 
It is easy to see that if $A$ is primitive, then $A^{\otimes k}$ is also primitive according to (iii).
An important property for a  primitive matrix is that its spectral radius can be expressed as
\bea
\label{primtrace}
\rho(A)=\lim_{k\to \infty}\tr^{1/k}(A^k),
\eea
instead of limit superior in general cases.
\begin{Lemma} 
\label{primlem}
Let $\F=\{A_1, \cdots, A_m\}\subset \IR^{n\times n}$ be a family of nonnegative matrices.
If there exists an integer $j\ge 1$ such that $A_{i_1}A_{i_2}\cdots A_{i_j}\in\F_j$ is primitive, 
then for any $k\ge 1$ the matrix
$
A_1^{\otimes k}+\ldots +A_m^{\otimes k}
$
is primitive.
\end{Lemma}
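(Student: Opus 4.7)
The plan is to prove primitivity of $S_k:=A_1^{\otimes k}+\cdots+A_m^{\otimes k}$ directly by exhibiting an integer $N$ for which $S_k^N$ is entry-wise positive. Since each summand $A_i^{\otimes k}$ is nonnegative, so is $S_k$, and so is every product of $N$ factors drawn from $\{A_1^{\otimes k},\ldots,A_m^{\otimes k}\}$. The strategy is therefore to expand $S_k^N$, retain one carefully chosen summand, and show that this single summand is already entry-wise positive; monotonicity of the entry-wise order under nonnegative additions then yields $S_k^N>0$.

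The expansion step rests on property (i) of the Kronecker product. Distributing the $N$-fold product gives
\[
S_k^N=\sum_{(j_1,\ldots,j_N)\in\{1,\ldots,m\}^N}A_{j_1}^{\otimes k}A_{j_2}^{\otimes k}\cdots A_{j_N}^{\otimes k}=\sum_{(j_1,\ldots,j_N)}\left(A_{j_1}A_{j_2}\cdots A_{j_N}\right)^{\otimes k},
\]
where the second equality follows from iterating (i). By hypothesis there exists $\ell\ge 1$ such that $B^\ell>0$, with $B:=A_{i_1}A_{i_2}\cdots A_{i_j}$. I would then set $N:=j\ell$ and single out the particular multi-index obtained by concatenating $\ell$ copies of $(i_1,\ldots,i_j)$; the corresponding summand in the expansion above equals $(B^\ell)^{\otimes k}$.

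Since $B^\ell$ is entry-wise positive and the Kronecker product of two entry-wise positive matrices is entry-wise positive (each of its entries is a product of two positive numbers), a short induction on $k$ gives $(B^\ell)^{\otimes k}>0$. Discarding the remaining nonnegative summands from the expansion then yields $S_k^N\ge(B^\ell)^{\otimes k}>0$, so $S_k$ is primitive, and in fact with the same exponent $N=j\ell$ for every $k\ge 1$.

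I do not anticipate a genuine obstacle here: the argument is essentially a bookkeeping exercise enabled by (i), with the only subtle point being the verification that the expansion literally contains every ordered word of length $N$ in the alphabet $\{A_1^{\otimes k},\ldots,A_m^{\otimes k}\}$ as a summand, which is immediate by distributivity. Property (iii) plays no explicit role in this particular proof, although one could equivalently rephrase the conclusion as $(B^{\otimes k})^\ell=(B^\ell)^{\otimes k}>0$.
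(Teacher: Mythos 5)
Your proof is correct and follows essentially the same route as the paper: expand a power of $A_1^{\otimes k}+\cdots+A_m^{\otimes k}$ via the mixed-product property, isolate the single summand corresponding to the primitive product $A_{i_1}\cdots A_{i_j}$, and use nonnegativity of the remaining terms. The only cosmetic difference is that the paper expands to length $j$ and then raises to the $\ell$-th power (invoking property (iii)), whereas you expand directly to length $j\ell$; both yield the same positive summand $\left(\left(A_{i_1}\cdots A_{i_j}\right)^{\ell}\right)^{\otimes k}$.
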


\begin{proof} Since $A_{i_1}A_{i_2}\cdots A_{i_j}$ is primitive, there exists a positive integer $\ell$ such that
\[ \left(A_{i_1}A_{i_2}\cdots A_{i_j}\right)^{\ell}>0.\]
The conclusion follows from the observation
\bean
\left[\left(A_1^{\otimes k}+\ldots +A_m^{\otimes k}\right)^j\right]^{\ell}&=&
\left[\sum_{1\le i_1, i_2, \cdots, i_j\le m} (A_{i_1}A_{i_2}\cdots A_{i_j})^{\otimes k}\right]^{\ell}\\
&\ge&\left[(A_{i_1}A_{i_2}\cdots A_{i_j})^{\otimes k}\right]^{\ell}=\left[(A_{i_1}A_{i_2}\cdots A_{i_j})^{\ell}\right]^{\otimes k}>0.
\eean
\end{proof}

The following lemma generalizes a recent result given by Xu \cite{Xu2010}.
\begin{Lemma} 
Let $\F=\{A_1, \cdots, A_m\}\subset \IR^{n\times n}$ be a family of nonnegative matrices. 
If there exists an integer $j\ge 1$ such that $A_{i_1}A_{i_2}\cdots A_{i_j}\in\F_j$ is primitive,
then we have
\[
\rho(\F)=\lim_{l\to \infty}\max_{A\in \F_l}{\tr^{1/l}(A)}.
\]
\end{Lemma}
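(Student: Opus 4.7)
The plan is to show that under the primitivity hypothesis, the sequence $L_\ell := \max_{A\in\F_\ell}\tr(A)$ (which is nonnegative, since products of entrywise nonnegative matrices are nonnegative) satisfies $\lim_{\ell\to\infty}L_\ell^{1/\ell}=\rho(\F)$. I will establish this by sandwiching $L_\ell^{1/\ell}$ between a $\liminf$-side and a $\limsup$-side that both converge to $\rho(\F)$, where the primitivity hypothesis is the ingredient that converts one side of the sandwich into a genuine limit rather than a mere $\limsup$.

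The main engine is the Blondel--Nesterov Kronecker formula (\ref{jsrkron}), $\rho(\F)=\lim_{k\to\infty}\rho(S_k)^{1/k}$ with $S_k:=A_1^{\otimes k}+\cdots+A_m^{\otimes k}$. By Lemma~\ref{primlem}, each $S_k$ is primitive, so the primitive-matrix trace identity (\ref{primtrace}) applied to $S_k$ gives the genuine limit
\[
\rho(S_k)=\lim_{\ell\to\infty}\tr(S_k^\ell)^{1/\ell}.
\]
Expanding $S_k^\ell$ via Kronecker property (i) and then taking the trace via (ii) yields
\[
\tr(S_k^\ell)=\sum_{1\le i_1,\ldots,i_\ell\le m}\tr^k\!\bigl(A_{i_1}A_{i_2}\cdots A_{i_\ell}\bigr),
\]
in which every summand is nonnegative and bounded above by $L_\ell^k$. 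This immediately produces the sandwich $L_\ell^{k}\le \tr(S_k^\ell)\le m^\ell L_\ell^{k}$, i.e., after taking $\ell$-th roots,
\[
L_\ell^{k/\ell}\ \le\ \tr(S_k^\ell)^{1/\ell}\ \le\ m\cdot L_\ell^{k/\ell}.
\]

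Now I would let $\ell\to\infty$: the middle quantity tends to $\rho(S_k)$ by the primitive trace identity. The left inequality then gives $(\limsup_\ell L_\ell^{1/\ell})^k\le\rho(S_k)$, while the right inequality gives $\rho(S_k)\le m\,(\liminf_\ell L_\ell^{1/\ell})^k$. Taking $k$-th roots and passing $k\to\infty$, both $m^{1/k}$ factors disappear and (\ref{jsrkron}) forces
\[
\limsup_{\ell\to\infty}L_\ell^{1/\ell}\ \le\ \rho(\F)\ \le\ \liminf_{\ell\to\infty}L_\ell^{1/\ell},
\]
so equality holds throughout and the claimed limit exists and equals $\rho(\F)$.

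The main obstacle is conceptual rather than computational: without some primitive product $A_{i_1}\cdots A_{i_j}\in\F_j$, the step $\rho(S_k)=\lim_\ell\tr(S_k^\ell)^{1/\ell}$ would only be a $\limsup$ (via the standard Gelfand/Chen--Zhou identity (\ref{jsrtrace})), and the above double limit in $\ell$ and then $k$ would fail to close into a genuine $\lim$ on the left-hand side of the conclusion. Lemma~\ref{primlem} is precisely the bridge that propagates primitivity of one product in $\F_j$ to primitivity of every Kronecker sum $S_k$, enabling the trace-limit identity (\ref{primtrace}) to be applied uniformly in $k$ and allowing the sandwich to deliver a bona fide limit.
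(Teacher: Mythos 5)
Your argument is correct and follows essentially the same route as the paper: expand $\tr\bigl((A_1^{\otimes k}+\cdots+A_m^{\otimes k})^{\ell}\bigr)=\sum_{A\in\F_\ell}\tr^{k}(A)$, use Lemma~\ref{primlem} and the primitive-matrix trace identity (\ref{primtrace}) to pass to a genuine limit in $\ell$, and then invoke (\ref{jsrkron}) as $k\to\infty$. The only (harmless, and slightly tidier) difference is that you extract the direction $\limsup_\ell L_\ell^{1/\ell}\le\rho(\F)$ from the lower half of your sandwich $L_\ell^{k}\le\tr(S_k^{\ell})\le m^{\ell}L_\ell^{k}$, whereas the paper only uses the upper half and imports the $\limsup$ direction from the known trace characterization (\ref{jsrtrace}).
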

\begin{proof}
 For any integers $k, ~l \ge 1$, there holds
\bean
{\rm tr}\left(A_1^{\otimes k}+\ldots +A_m^{\otimes k}\right)^l
 &=& {\rm tr}\left[\sum_{1 \le i_1, \ldots, i_l \le m}\left(A_{i_1}\cdots A_{i_l}\right)^{\otimes k}\right]\\
 &=& \sum_{1 \le i_1, \ldots, i_l \le m}{\rm tr}\left(A_{i_1}\cdots A_{i_l}\right)^{\otimes k}\\
 &=& \sum_{1 \le i_1, \ldots, i_l \le m}{\rm tr}^{k}\left(A_{i_1}\cdots A_{i_l}\right),
\eean
i.e. 
\[
{\rm tr}\left(A_1^{\otimes k}+\ldots +A_m^{\otimes k}\right)^l=\sum_{A \in \F_l}{\rm tr}^{k}(A),
\]
where the cardinality of $\F_l$ is $m^l$. This leads to
\[
\left[{\rm tr}^{1/l}\left(A_1^{\otimes k}+\ldots +A_m^{\otimes k}\right)^l\right]^{1/k}\le m^{1/k}\left(\max_{A\in \F_l}{\rm tr}^{1/l}(A)\right)
\]
By assuming that $A_{i_1}A_{i_2}\cdots A_{i_j}\in\F_j$ is primitive for some $j\ge 1$,
we know from Lemma \ref{primlem} that $A_1^{\otimes k}+\ldots +A_m^{\otimes k}$ is primitive for all positive integer $k$. For a fixed $k\ge 1$, by taking the limit inferior over $l$ on both sides and noting (\ref{primtrace}), we have
\[
\rho^{1/k}(A_1^{\otimes k}+\ldots +A_m^{\otimes k})\le m^{1/k} \liminf_{l\to \infty}\max_{A\in \F_l}{\rm tr}^{1/l}(A).
\]
Now by letting $k\to \infty$ and utilizing (\ref{jsrkron}) we obtain
\[
\rho(\F)\le \liminf_{l\to \infty}\max_{A\in \F_l}{\rm tr}^{1/l}(A).
\]
By combining this with the known equality
\[
\rho(\F)=\limsup_{l\to \infty}\max_{A\in \F_l}{\rm tr}^{1/l}(A),
\]
we derive
\[
\limsup_{l\to \infty}\max_{A\in \F_l}{\rm tr}^{1/l}(A)=\rho(\F)\le \liminf_{l\to \infty}\max_{A\in \F_l}{\rm tr}^{1/l}(A)
\]
which leads to the conclusion.
\end{proof}
\begin{Corollary}
\label{rankonelim}
 Let $\F=\{A_1, \cdots, A_m\}\subset \IR^{n\times n}$ be a family of nonnegative matrices. 
If there exists an integer $j\ge 1$ such that $A_{i_1}A_{i_2}\cdots A_{i_j}\in\F_j$ is primitive, 
then there holds
 \bea
\label{rank1lim}
\rho(\F)=\lim_{k\to\infty}\rho(\Rk(\F_k))^{1/k}.
\eea
\end{Corollary}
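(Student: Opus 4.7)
The plan is to squeeze $\rho(\Rk(\F_k))^{1/k}$ between two sequences that converge, as \emph{genuine} limits rather than merely $\limsup$, to $\rho(\F)$. The two bounding inequalities are already available from the proof of Theorem \ref{rankonesup}; the primitivity hypothesis enters only through the preceding Lemma, which upgrades the trace characterization of the JSR from a $\limsup$ to a bona fide limit.

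First, I would recycle the upper estimate established inside the proof of Theorem \ref{rankonesup}, namely
$$\rho(\Rk(\F_k))^{1/k}\le \left(\max_{A\in\F_k}\|A\|_2\right)^{1/k}.$$
Because $\|\cdot\|_2$ is sub-multiplicative, the sequence $f(k)=\log\max_{A\in\F_k}\|A\|_2$ is sub-additive, so Fekete's lemma guarantees that $\lim_{k\to\infty}\max_{A\in\F_k}\|A\|_2^{1/k}$ exists and equals $\inf_k\max_{A\in\F_k}\|A\|_2^{1/k}=\rho(\F)$. Consequently $\limsup_{k\to\infty}\rho(\Rk(\F_k))^{1/k}\le\rho(\F)$.

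Second, I would recycle the lower estimate from that same proof,
$$\rho(\Rk(\F_k))^{1/k}\ge n^{-1/k}\left(\max_{A\in\F_k}|\tr(A)|\right)^{1/k}.$$
This is where the primitivity assumption pays off: the preceding Lemma asserts that, under that hypothesis, $\lim_{k\to\infty}\max_{A\in\F_k}|\tr(A)|^{1/k}=\rho(\F)$ exists as a true limit. Since $n^{-1/k}\to 1$, taking the $\liminf$ on both sides gives $\liminf_{k\to\infty}\rho(\Rk(\F_k))^{1/k}\ge\rho(\F)$. Combining this with the $\limsup$ bound from the previous paragraph forces the two to coincide with $\rho(\F)$, so the limit in (\ref{rank1lim}) exists and has the asserted value.

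No new technical ingredient is needed: both defining inequalities have already been proved, and Fekete's lemma is entirely standard. The only real content is the primitivity-driven upgrade of the trace formula, which was handled in the preceding Lemma. The main step I would want to double-check is that the convergence $\max_{A\in\F_k}\|A\|_2^{1/k}\to\rho(\F)$ is genuinely a limit (not just $\limsup$); this is the classical Rota--Strang/Fekete observation and poses no obstacle, so the proof is essentially a repackaging of Theorem \ref{rankonesup} under the strengthened trace identity.
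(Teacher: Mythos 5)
Your proposal is correct and follows essentially the same route as the paper: reuse the two inequalities from the proof of Theorem \ref{rankonesup}, keep the upper bound as a $\limsup$ estimate, and use the primitivity hypothesis (via the preceding Lemma upgrading the trace characterization to a genuine limit) to get the matching $\liminf$ lower bound. The Fekete's-lemma aside is harmless but not needed, since only the $\limsup$ of the norm bound is used.
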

\begin{proof}
Similar to the proof of Theorem \ref{rankonesup}, by repeating the argument in the first part, it follows the inequality
\bea
 \limsup_{k\to\infty}\rho(\Rk(\F_k))^{1/k}&\le & \rho(\F).
\label{ineq1cor}
\eea
While in the second part, we have
\bean
\rho(\Rk(\F_{k}))^{1/{k}} &\ge & n^{-1/{k}}\left( \max_{A\in\F_{k}}|\tr(A)|\right)^{1/k}
\eean
for all $k\ge 1$,
taking the limit inferior on both sides and employing Lemma \ref{primtrace} gives
\bea
\label{ineq2cor}
\liminf_{k\to\infty}\rho(\Rk(\F_k))^{1/k}&\ge&
\lim_{k\to\infty} n^{-1/{k}}\left( \max_{A\in\F_{k}}|\tr(A)|\right)^{1/k}=\rho(\F).
\eea
According to (\ref{ineq1cor}) and (\ref{ineq2cor}), we thus have
\[
\limsup_{k\to\infty}\rho(\Rk(\F_k))^{1/k}\le \rho(\F)\le \liminf_{k\to\infty}\rho(\Rk(\F_k))^{1/k},
\]
which implies the conclusion.
\end{proof}

The primitive condition in Corollary \ref{rankonelim} is verifiable in numerical computations since a nonnegative matrix $A\in\Rnn$ is primitive if and only if $A^{n^2-2n+2}$ is positive \cite{Horn1990}.
Moreover, such an approximation to JSR tends to be observed earlier than the limit superior as $k$ increases,
which will be illustrated in the following numerical examples.




\section{Numerical examples}

To demonstrate the effectiveness of our proposed approach,
we next present numerical simulations on several important examples from current literature.  
All experiments are performed on MATLAB 7.10 with a machine precision of $10^{-16}$.
For $k\ge 1$, we denote
$$\hat{\rho}_k(\F)=\max_{A\in\F_k}\|A\|_2^{1/k},\quad
 \bar{\rho}_k(\F)=\max_{A\in\F_k}\rho(A)^{1/k}, \quad \mbox{and} \quad
\tilde{\rho}_k(\F)=\left[\rho(\Rk(\F_k))\right]^{1/k}.$$

\subsection{Generalized partition function}
In number theory, a long-lasting problem is to estimate 
the asymptotic growth of the generalized partition function
$f_{p,c}(t)$ defined as the total number of different $p$-adic expansions
 $t=\sum_{j=0}^{\infty} c_j p^j$ with $c_j\in\{0,1,\ldots,c-1\}$.
It has been shown that for given positive integers $p$ and $c$ there exist
positive constants $C$ and $\gamma$ such that
$
 f_{p,c}(t)\ge C t^{\gamma}
$
as $t\to\infty$.
Moreover, there is a procedure \cite{Protasov2000} to construct a family of binary matrices $\F$ dependent on $p$ and $c$
with the relation $\rho(\F)=p^{\gamma}$.

In \cite{Protasov2010}, a conic programming approach was proposed to 
approximate the JSR of this matrix family $\F$ of dimension $7\times 7$ with $(p,c)=(3,14)$.
In this case the matrix family is given by
\bean
\F&=&\left\{
A_1=\left[
\begin{array}{ccccccc}
 1 & 1 & 1 & 1 & 1 & 0 & 0 \\
 0 & 1 & 1 & 1 & 1 & 0 & 0 \\
 0 & 1 & 1 & 1 & 1 & 1 & 0 \\
 0 & 1 & 1 & 1 & 1 & 1 & 0 \\
 0 & 0 & 1 & 1 & 1 & 1 & 0 \\
 0 & 0 & 1 & 1 & 1 & 1 & 1 \\
 0 & 0 & 1 & 1 & 1 & 1 & 1
\end{array}
\right],
A_2=\left[
\begin{array}{ccccccc}
 1 & 1 & 1 & 1 & 1 & 0 & 0 \\
 1 & 1 & 1 & 1 & 1 & 0 & 0 \\
 0 & 1 & 1 & 1 & 1 & 0 & 0 \\
 0 & 1 & 1 & 1 & 1 & 1 & 0 \\
 0 & 1 & 1 & 1 & 1 & 1 & 0 \\
 0 & 0 & 1 & 1 & 1 & 1 & 0 \\
 0 & 0 & 1 & 1 & 1 & 1 & 1
\end{array}
\right]\right.,\\
&&\left.A_3=\left[
\begin{array}{ccccccc}
 1 & 1 & 1 & 1 & 0 & 0 & 0 \\
 1 & 1 & 1 & 1 & 1 & 0 & 0 \\
 1 & 1 & 1 & 1 & 1 & 0 & 0 \\
 0 & 1 & 1 & 1 & 1 & 0 & 0 \\
 0 & 1 & 1 & 1 & 1 & 1 & 0 \\
 0 & 1 & 1 & 1 & 1 & 1 & 0 \\
 0 & 0 & 1 & 1 & 1 & 1 & 0
\end{array}
\right]
\right\}.
\eean
According to  \cite{Protasov2010} the estimated interval of $\rho(\F)$ is given by
$[4.72,4.8]$, where the lower bound is attained by using the sequence $A_1A_2$ and
the upper bound is searched through the conic algorithm among all possible matrix products within length $k\le 9$. 
There is no indication what the value of $\rho(\F)$ in their approach is.  

 \begin{table}[!h]
\centering
\caption{The values of $\bar{\rho}_k(\F),\hat{\rho}_k(\F),\tilde{\rho}_k(\F)$ with respect to $k$, $(p,c)=(3,14)$.}
\begin{tabular}{|c|l|l|l|c|}
\hline
$k$&$\hat{\rho}_k(\F)$&$\bar{\rho}_k(\F)$&$\tilde{\rho}_k(\F)$&$|\tilde{\rho}_k(\F)-\rho^{1/2}(A_1A_2)|$\\
\hline
1&	5.262878 &	4.6690790883&	 4.7915415825& 	 6.95e-02 \\ 
2&	5.046134 &	4.7220451340&	 4.7208642368& 	 1.18e-03 \\ 
3&	4.936157 &	4.7122439907&	 4.7216905518& 	 3.55e-04 \\ 
4&	4.881518 &	4.7220451340&	 4.7220575153& 	 1.24e-05 \\ 
5&	4.849140 &	4.7164125255&	 4.7220470073& 	 1.87e-06 \\ 
6&	4.827731 &	4.7220451340&	 4.7220461006& 	 9.67e-07 \\ 
7&	4.812488 &	4.7180343424&	 4.7220452529& 	 1.19e-07 \\ 
8&	4.801089 &	4.7220451340&	 4.7220451879& 	 5.39e-08 \\ 
 \hline
\end{tabular}
\label{TBrankone7by7}
 \end{table} 
\begin{figure}[!h]
\caption{
The values of $\bar{\rho}_k(\F),\hat{\rho}_k(\F),\tilde{\rho}_k(\F)$ with respect to $k$, $(p,c)=(3,14)$.}
\centering{\epsfysize=3.0truein \leavevmode \epsffile{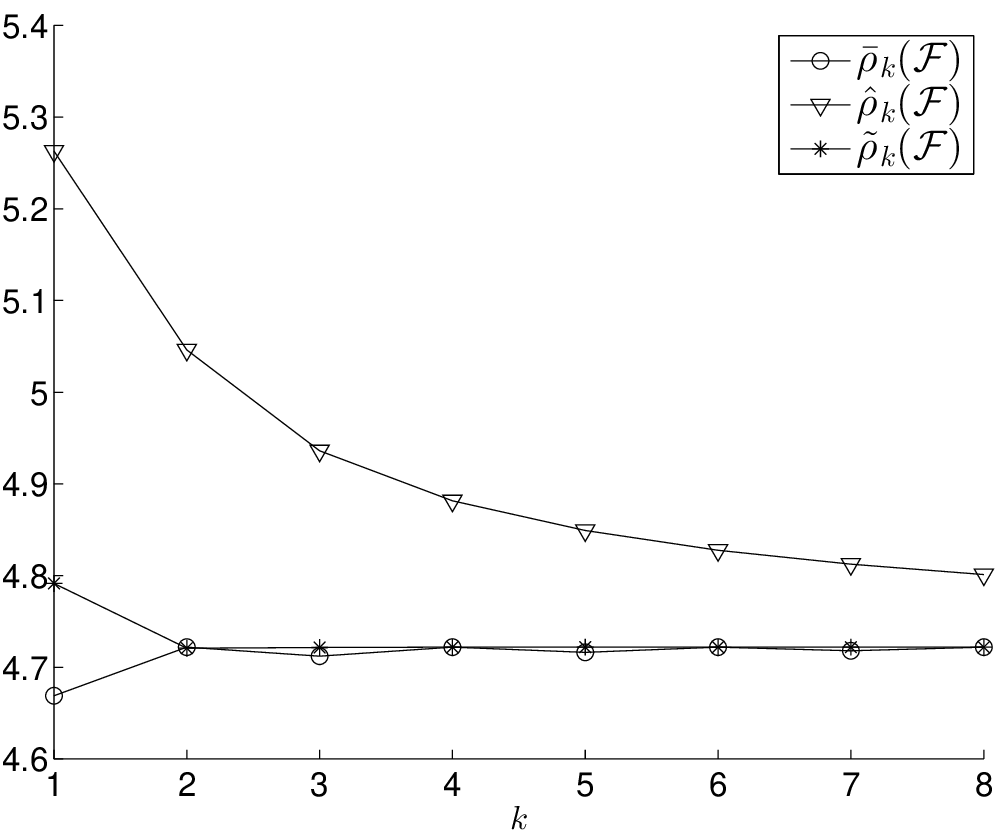}}
\label{rankone7by7}
\end{figure}

 
The first step rank-one approximation $\Rk(\F)$ by formula (\ref{rank1formula}) gives
\[
\rho(\Rk(\F))\approx 4.7915415825,
\]
which already falls into the interval $[4.72,4.8]$.  The numerical values of $\hat{\rho}_k(\F)$,
 $\bar{\rho}_k(\F)$, and $\tilde{\rho}_k(\F)$ for $k\ge 1$
are plotted and shown in Figure \ref{rankone7by7} and Table \ref{TBrankone7by7}, respectively.
Notice that the product $A_1A_3A_1$ is positive. 
According to Corollary \ref{rankonelim}, we know there holds $\lim_{k\to \infty}\tilde{\rho}_k(\F)=\rho(\F)$. 
Therefore, we also compute the absolute difference between $\tilde{\rho}_k(\F)$ and $\rho^{1/2}(A_1A_2)$,
 and the numerical results presented in Table 1 imply  that $A_1A_2$ is  very likely the spectral maximizing sequence of $\rho(\F)$.


\subsection{Asymptotics of Overlap-free words}
A word on the binary alphabet $\{a,b\}$ is said to be overlap-free if it has no sub-words (or factors) of the form $xwxwx$,
where $x\in\{a,b\}$ and $w$ could be a word or empty.
For instance, the word $baabaa$ is overlap-free, but $baabaab$ is not.
The asymptotic growth of the number $t_l$ of binary overlap-free words of length $l$
could be expressed in terms of the JSR of a matrix pair $\F$ as \cite{Jungers2009} 
\[
 \limsup_{l\to \infty}\frac{\ln t_l}{\ln l}=\log_2 \rho(\F),
\]
where $\F=\{A_1,A_2\}\subset \{0,1,2,4\}^{20\times 20}$ is given  by
\[
A_1=\bmat{C& \bm{0}\\D& B}\qquad \textnormal{and}\qquad
A_2=\bmat{D& B\\\bm{0}& C}
\]
with sub-matrices
\[
B=\left[\begin{array}{cccccccccc} 
0 & 0 & 0 & 0 & 0 & 0 & 0 & 1 & 2 & 1\\ 
0 & 0 & 0 & 0 & 0 & 0 & 0 & 0 & 0 & 0\\
 0 & 0 & 0 & 0 & 0 & 1 & 1 & 0 & 0 & 0\\ 
0 & 0 & 1 & 1 & 0 & 0 & 0 & 0 & 0 & 0\\ 
0 & 0 & 0 & 0 & 0 & 0 & 0 & 0 & 0 & 0\\ 
0 & 0 & 0 & 0 & 0 & 0 & 0 & 0 & 0 & 0\\ 
0 & 0 & 0 & 0 & 0 & 0 & 0 & 0 & 0 & 0\\
 0 & 0 & 0 & 0 & 1 & 0 & 0 & 0 & 0 & 0\\ 
0 & 1 & 0 & 0 & 0 & 0 & 0 & 0 & 0 & 0\\
 1 & 0 & 0 & 0 & 0 & 0 & 0 & 0 & 0 & 0 
\end{array}\right],
\]
\[
C=\left[\begin{array}{cccccccccc} 
0 & 0 & 0 & 0 & 0 & 0 & 0 & 2 & 4 & 2\\ 
0 & 0 & 1 & 1 & 0 & 1 & 1 & 0 & 0 & 0\\ 
0 & 0 & 0 & 0 & 0 & 1 & 1 & 1 & 1 & 0\\ 
0 & 0 & 1 & 1 & 0 & 0 & 0 & 0 & 0 & 0\\ 
0 & 0 & 0 & 0 & 0 & 0 & 0 & 0 & 0 & 0\\ 
0 & 1 & 0 & 0 & 1 & 0 & 0 & 0 & 0 & 0\\ 
1 & 1 & 0 & 0 & 0 & 0 & 0 & 0 & 0 & 0\\ 
0 & 0 & 0 & 0 & 0 & 2 & 0 & 0 & 0 & 0\\ 
0 & 0 & 1 & 0 & 0 & 0 & 0 & 0 & 0 & 0\\ 
0 & 0 & 0 & 0 & 0 & 0 & 0 & 0 & 0 & 0 
\end{array}\right],
\]
and \[
D=\left[\begin{array}{cccccccccc}
 0 & 0 & 0 & 0 & 0 & 0 & 0 & 1 & 2 & 1\\
 0 & 0 & 1 & 1 & 0 & 1 & 1 & 0 & 0 & 0\\ 
0 & 0 & 0 & 0 & 0 & 0 & 0 & 1 & 1 & 0\\
 0 & 0 & 0 & 0 & 0 & 0 & 0 & 0 & 0 & 0\\ 
1 & 2 & 0 & 0 & 1 & 0 & 0 & 0 & 0 & 0\\ 
0 & 0 & 1 & 0 & 0 & 1 & 0 & 0 & 0 & 0\\ 
0 & 0 & 0 & 0 & 0 & 0 & 0 & 0 & 0 & 0\\ 
0 & 0 & 0 & 0 & 0 & 0 & 0 & 1 & 0 & 0\\
 0 & 0 & 0 & 0 & 0 & 0 & 0 & 0 & 0 & 0\\ 
0 & 0 & 0 & 0 & 0 & 0 & 0 & 0 & 0 & 0 
\end{array}\right].
\]
This problem was firstly considered in \cite{Jungers2009} and then in \cite{Protasov2010},
where both ellipsoidal norm approximation and conic programming approach produce
the same bound, i.e., $\rho(\F)\in[2.5179,2.5186]$, among all matrix products within the length $k\le 14$.
In particular, $\rho(A_1A_2)^{1/2}\approx 2.5179$ gives the lower bound.

Moreover, the authors in \cite{Jungers2009} conjectured the sequence $A_1A_2$ is the spectral maximizing sequence. 
The numerical values of $\hat{\rho}_k(\F)$,  $\bar{\rho}_k(\F)$, and $\tilde{\rho}_k(\F)$
are plotted and presented in Figure \ref{rankone20by20} and Table \ref{TBrankone20by20}, respectively.
By observing that $$|\tilde{\rho}_k(\F)-\rho(A_1A_2)^{1/2}|\le 5\times 10^{-4}$$ when $k\ge 9$, 
our numerical results thus agree with their conjecture.

\begin{figure}[!h]
\caption{
The values of $\bar{\rho}_k(\F),\hat{\rho}_k(\F),\tilde{\rho}_k(\F)$ with respect to $k$.}
\centering{\epsfysize=3.0truein \leavevmode \epsffile{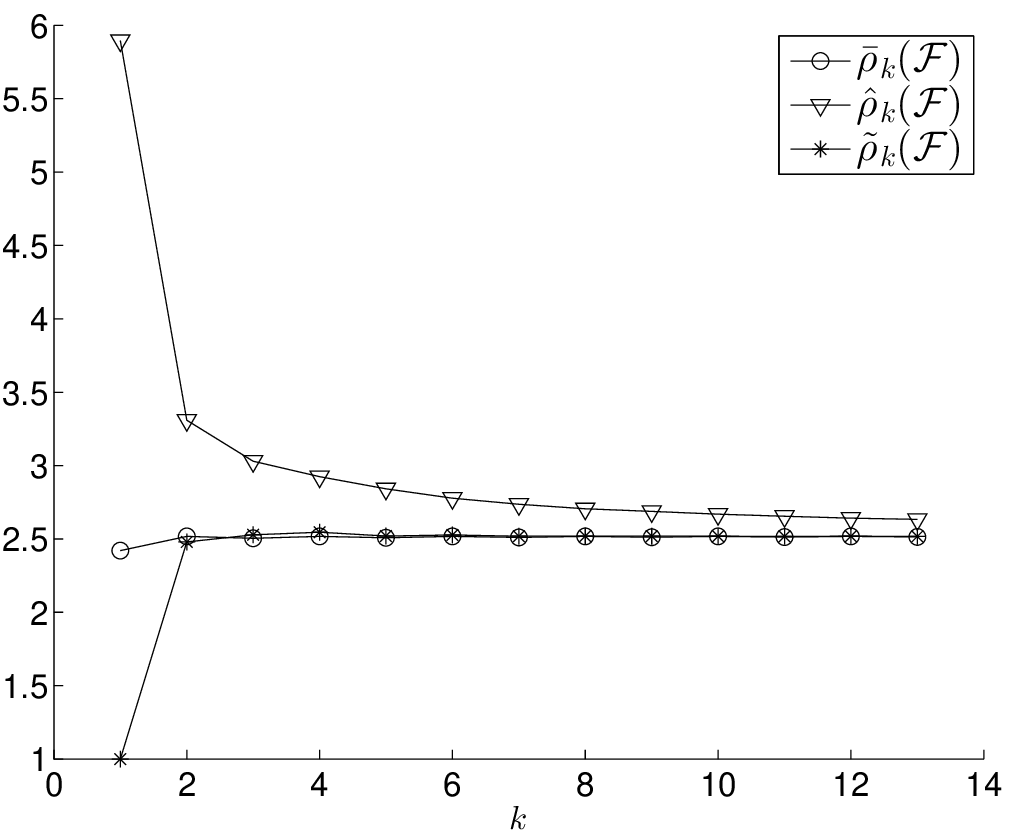}}
\label{rankone20by20}
\end{figure}

 \begin{table}[!h]
\centering
\caption{The values of $\bar{\rho}_k(\F),\hat{\rho}_k(\F),\tilde{\rho}_k(\F)$ with respect to $k$.}
\begin{tabular}{|c|l|l|l|c|}
\hline
$k$&$\hat{\rho}_k(\F)$&$\bar{\rho}_k(\F)$&$\tilde{\rho}_k(\F)$&$|\tilde{\rho}_k(\F)-\rho(A_1A_2)^{1/2}|$\\
\hline
1&	5.896964 &	2.4206250653&	 1.0000000000& 	 1.52e+00 \\ 
2&	3.309093 &	2.5179340409&	 2.4799585961& 	 3.80e-02 \\ 
3&	3.029307 &	2.5048603453&	 2.5279522425& 	 1.00e-02 \\ 
4&	2.924657 &	2.5179340409&	 2.5459319895& 	 2.80e-02 \\ 
5&	2.841023 &	2.5080155383&	 2.5201385520& 	 2.20e-03 \\ 
6&	2.778162 &	2.5179340409&	 2.5268682549& 	 8.93e-03 \\ 
7&	2.736156 &	2.5099337275&	 2.5190358732& 	 1.10e-03 \\ 
8&	2.705763 &	2.5179340409&	 2.5199752844& 	 2.04e-03 \\ 
9&	2.687999 &	2.5118420373&	 2.5180910647& 	 1.57e-04 \\ 
10&	2.669268 &	2.5179340409&	 2.5184122994& 	 4.78e-04 \\ 
11&	2.654756 &	2.5129654473&	 2.5179476144& 	 1.36e-05 \\ 
12&	2.642173 &	2.5179340409&	 2.5180554550& 	 1.21e-04 \\ 
13&	2.632798 &	2.5137397302&	 2.5179399051& 	 5.86e-06 \\ 
 \hline
\end{tabular}
\label{TBrankone20by20}
 \end{table}

\subsection{An example with oscillated approximation}
Consider the matrix pair \cite{Morris2010}
\[\F=\set{
A_1=\bmat{0& 1\\1& 0},
A_2=\bmat{1/2& 0\\0& 1/2}
}.
\]
The numerical values of $\hat{\rho}_k(\F)$,
 $\bar{\rho}_k(\F)$, and $\tilde{\rho}_k(\F)$
are plotted and reported in Figure \ref{rankoneworst} and Table \ref{TBrankoneworst}, respectively.
Although this matrix family does not satisfy the assumption of Corollary \ref{rankonelim}, one can
easily show that $\rho(\F)=\lim_{k\to \infty} \tilde{\rho}_k(\F)$. This example illustrates that the assumption of Corollary \ref{rankonelim} is sufficient but not necessary. 
\begin{figure}[!h]
\caption{
The values of $\bar{\rho}_k(\F),\hat{\rho}_k(\F),\tilde{\rho}_k(\F)$ with respect to $k$.}
\centering{\epsfysize=3.0truein \leavevmode \epsffile{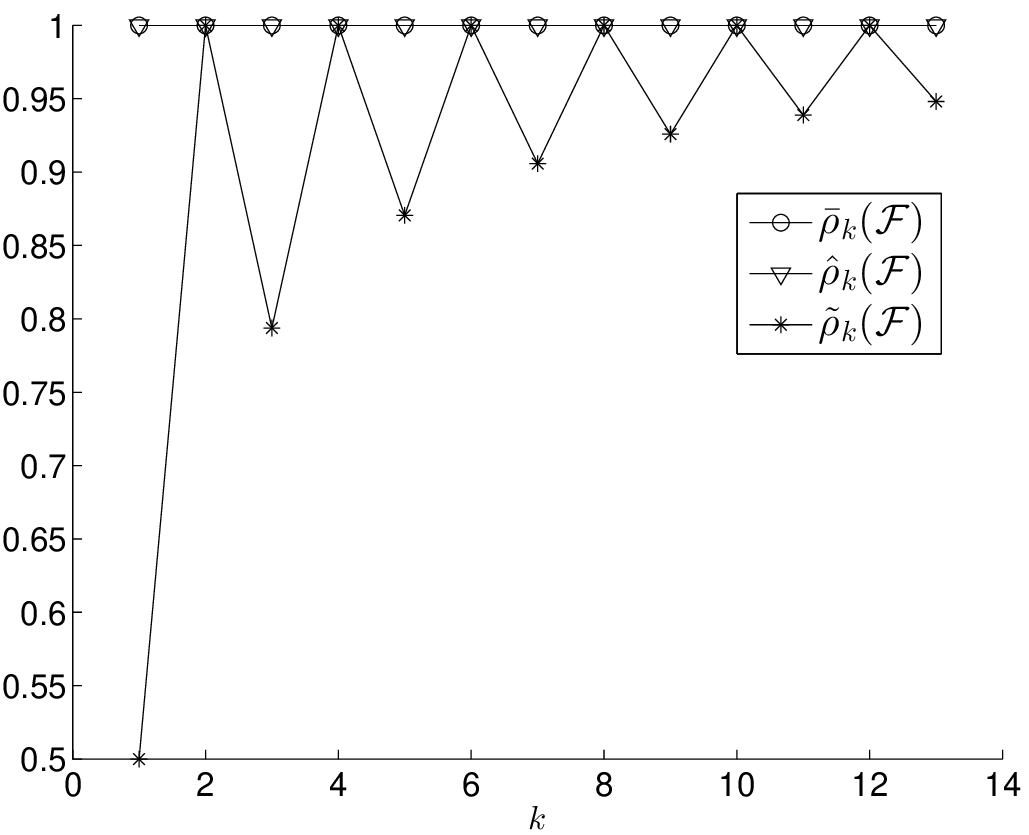}}
\label{rankoneworst}
\end{figure}

 \begin{table}[!h]
\centering
\caption{The values of $\bar{\rho}_k(\F),\hat{\rho}_k(\F),\tilde{\rho}_k(\F)$ with respect to $k$.}
\begin{tabular}{|c|l|l|l|c|}
\hline
$k$&$\hat{\rho}_k(\F)$&$\bar{\rho}_k(\F)$&$\tilde{\rho}_k(\F)$&$|\tilde{\rho}_k(\F)-\rho(\F)|$\\
\hline
1&	1.000000 &	1.0000000000&	 0.5000000000& 	 5.00e-01 \\ 
2&	1.000000 &	1.0000000000&	 1.0000000000& 	 0 \\ 
3&	1.000000 &	1.0000000000&	 0.7937005260& 	 2.06e-01 \\ 
4&	1.000000 &	1.0000000000&	 1.0000000000& 	 0 \\ 
5&	1.000000 &	1.0000000000&	 0.8705505633& 	 1.29e-01 \\ 
6&	1.000000 &	1.0000000000&	 1.0000000000& 	 0 \\ 
7&	1.000000 &	1.0000000000&	 0.9057236643& 	 9.43e-02 \\ 
8&	1.000000 &	1.0000000000&	 1.0000000000& 	 0 \\ 
9&	1.000000 &	1.0000000000&	 0.9258747123& 	 7.41e-02 \\ 
10&	1.000000 &	1.0000000000&	 1.0000000000& 	 0 \\ 
11&	1.000000 &	1.0000000000&	 0.9389309107& 	 6.11e-02 \\ 
12&	1.000000 &	1.0000000000&	 1.0000000000& 	 0 \\ 
13&	1.000000 &	1.0000000000&	 0.9480775143& 	 5.19e-02 \\ 
 \hline
\end{tabular}
\label{TBrankoneworst}
 \end{table} 
\subsection{Matrix pair with a rank-one member}
Consider the matrix pair
\[\F=\set{
A_1=\bmat{1& \frac{1}{\sqrt{7}}\\0& 1},
A_2=\bmat{1& -1\\1& -1}
}.
\]
Based on formula (\ref{rank1formula2}), it is easy to show that
\[\rho(\F)=\max_{\ell\ge 1} \left(\frac{\ell}{\sqrt{7}}\right)^{1\over \ell+1}
=\left(\frac{8}{\sqrt{7}}\right)^{1/ 9}\approx 1.130819895422034
\]
with the spectral maximizing sequence $A_1^{8} A_2$.
The numerical values of $\hat{\rho}_k(\F)$,
 $\bar{\rho}_k(\F)$, and $\tilde{\rho}_k(\F)$ are plotted in Figure \ref{rankone2by2}
and reported in Table \ref{TBrankone2by2}, respectively.
One can see that for small $k$,  $\tilde{\rho}_k(\F)$
approaches $\rho(\F)$ much better than $\hat{\rho}_k(\F)$ and $\bar{\rho}_k(\F)$. 
Moreover, the approximation $\tilde{\rho}_9(\F)$ does provide the exact value of $\rho(\F)$,
because $A_1^{8} A_2$ is rank-one.
\begin{figure}[!h]
\caption{
The values of $\bar{\rho}_k(\F),\hat{\rho}_k(\F),\tilde{\rho}_k(\F)$ with respect to $k$.}
\centering{\epsfysize=3.0truein \leavevmode \epsffile{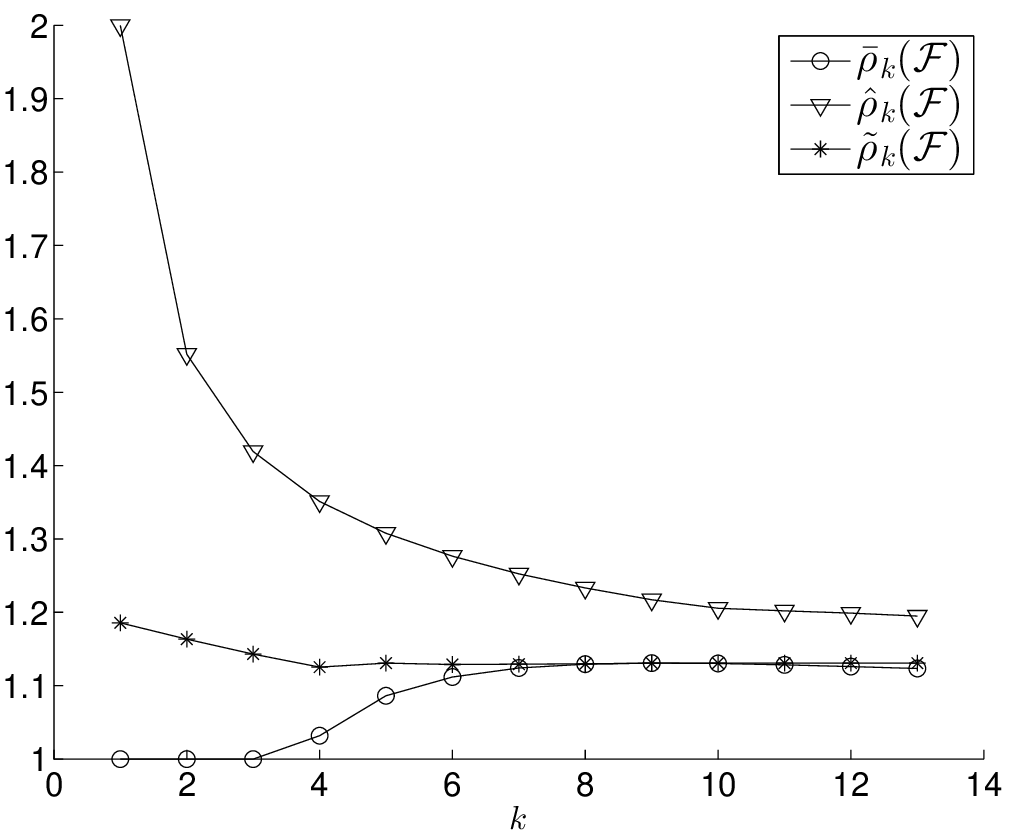}}
\label{rankone2by2}
\end{figure}
\begin{table}[!h]
\centering
\caption{The values of $\bar{\rho}_k(\F),\hat{\rho}_k(\F),\tilde{\rho}_k(\F)$ with respect to $k$.}
\begin{tabular}{|c|l|l|l|c|}
\hline
$k$&$\hat{\rho}_k(\F)$&$\bar{\rho}_k(\F)$&$\tilde{\rho}_k(\F)$&$|\tilde{\rho}_k(\F)-\rho(\F)|$\\
\hline
1&	2.000000 &	1.0000000000&	 1.1856953382& 	 5.49e-02 \\ 
2&	1.551714 &	1.0000000000&	 1.1634231348& 	 3.26e-02 \\ 
3&	1.419079 &	1.0000000000&	 1.1429810565& 	 1.22e-02 \\ 
4&	1.351138 &	1.0319129405&	 1.1253876915& 	 5.43e-03 \\ 
5&	1.307649 &	1.0861809816&	 1.1307916559& 	 2.82e-05 \\ 
6&	1.276412 &	1.1119113517&	 1.1288979596& 	 1.92e-03 \\ 
7&	1.252402 &	1.1240880182&	 1.1292099639& 	 1.61e-03 \\ 
8&	1.233124 &	1.1293241815&	 1.1297917288& 	 1.03e-03 \\ 
9&	1.217169 &	1.1308198954&	 1.1308198954& 	 0 \\ 
10&	1.205486 &	1.1302365536&	 1.1308198954& 	 0 \\ 
11&	1.201904 &	1.1284843579&	 1.1308198954& 	 0 \\ 
12&	1.198889 &	1.1260827492&	 1.1308198954& 	 0 \\ 
13&	1.194836 &	1.1233372781&	 1.1308198954& 	 0 \\ 
\hline
\end{tabular}
\label{TBrankone2by2}
 \end{table} 

\section{Concluding Remarks}
The computation of joint/generalized spectral radius has been proven to be challenging and difficult in general according to current literature since both of them are the characteristics of the worst-case operation count which usually grows faster than any polynomial in terms of matrix sizes. To identify which class of matrix families has finiteness property may lead to various efficient algorithms which can significantly reduce the computational cost.  

In this paper, we show that any finite set of matrices with at most one element's rank being greater than one possesses the finiteness property and give an explicit formula of its joint/generalized spectral radius. This result provides the possibility for using the rank-one approximation to estimate the JSR of the original matrix set.  Thus, by making use of the rank-one approximation based on singular value decomposition, we obtain some new characterizations of joint/generalized spectral radius. Numerical computations on several benchmark examples from applications show some good promises for the proposed approach.
However, we are not able to provide an estimate for the convergence rate at this point.

{\bf Acknowledgment.} The authors would like to thank Ahmadi and Parrilo for pointing out an error in section 2.1 of our earlier version posted on arXiv:1109.1356. 

\bibliographystyle{model1b-num-names}
\bibliography{refbibtex}

\end{document}